\documentclass{article}

\usepackage{arxiv}

\usepackage[T1]{fontenc}    
\usepackage{hyperref}       
\usepackage{url}            
\usepackage{CJK}
\usepackage{booktabs}       
\usepackage{amsfonts}       
\usepackage{nicefrac}       
\usepackage{microtype}      
\usepackage{lipsum}
\usepackage{graphicx}
\usepackage{amsmath}
\usepackage[all]{xy}
\usepackage{amsthm}
\usepackage{eucal}
\usepackage{framed}

\newtheorem{theorem}{Theorem}
\theoremstyle{definition}

\newtheorem{example}[theorem]{Example}

\newtheorem{lemma}{Lemma}

\title{Some Estimates of the Generalized Beukers Integral with Techniques of Partial Fraction Decomposition}

\author{
  Xiaowei Wang(Potsdam)\thanks{This paper was written in June 2020}
   \\
}

\begin{document}
\maketitle

\begin{abstract}
In this paper we establish the generalized Beukers integral $I_{m}(a_{1},...,a_{n})$ with some methods of partial fraction decomposition. Thus one obtains an explicit expression of the generalized Beukers integral. Further, we estimate the rational denominator of $I$ and. In the second section of this paper, we provide some estimates of the upper and lower bound of the value $J_{3}$, which involves the generalized Beukers integral and is related to $\zeta(5)$.
\end{abstract}

\keywords{generalized Beukers integral, zeta(5), partial fraction decomposition}

\section{The Lemmas}
\begin{lemma}\label{lemma1}(Homogeneous partial fraction decomposition)\\
Let $a_{1},...,a_{n}$ be distinct complex number, $x\in \mathbb{C}\backslash \{-a_{1},...,-a_{n}\} $, then there exist $\lambda_{1},...,\lambda_{n}\in \mathbb{C}$ such that following identity is true,\\
\begin{equation}\label{pfd1}
  \prod_{i=1}^{n}\frac{1}{a_{i}+x}=\sum_{i=1}^{n}\frac{\lambda_{i}}{a_{i}+x}
\end{equation}
where $\lambda_{i}$ has explicit expression as following. They only depend on $a_{1},...,a_{n}$.\\
\begin{equation*}
  \lambda_{i}=\prod_{j=1,j\neq i}^{n}\frac{1}{a_{j}-a_{i}}
\end{equation*}
Further, we have\\
\begin{equation*}
  \sum_{i=1}^{n}\lambda_{i}=0
\end{equation*}
\end{lemma}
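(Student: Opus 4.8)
The plan is to avoid appealing to the abstract partial-fraction theorem and instead verify the claimed decomposition directly, which will also yield the sum identity as a by-product. First I would clear denominators: multiplying the asserted identity \eqref{pfd1} through by the common denominator $D(x)=\prod_{i=1}^{n}(a_{i}+x)$ reduces everything to the single polynomial identity
\[
P(x):=\sum_{i=1}^{n}\lambda_{i}\prod_{j\neq i}(a_{j}+x)=1,
\]
understood as an identity of polynomials in $x$. Since $a_{1},\dots,a_{n}$ are distinct, $D$ has $n$ distinct simple roots $-a_{1},\dots,-a_{n}$, so it suffices to prove $P(x)\equiv 1$; dividing back by $D(x)$, which is legitimate for $x\notin\{-a_{1},\dots,-a_{n}\}$, then recovers \eqref{pfd1} and at the same time establishes existence of the $\lambda_{i}$, since we have exhibited them explicitly.

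The key step is a degree-versus-values argument. Each product $\prod_{j\neq i}(a_{j}+x)$ is monic of degree $n-1$, so $P$ has degree at most $n-1$. I would evaluate $P$ at each node $x=-a_{k}$: every summand with $i\neq k$ contains the factor $(a_{k}+x)$, which vanishes there, so only the $i=k$ term survives, giving
\[
P(-a_{k})=\lambda_{k}\prod_{j\neq k}(a_{j}-a_{k})=1
\]
by the definition $\lambda_{k}=\prod_{j\neq k}(a_{j}-a_{k})^{-1}$. Thus $P-1$ is a polynomial of degree at most $n-1$ vanishing at the $n$ distinct points $-a_{1},\dots,-a_{n}$, hence $P-1\equiv 0$, i.e. $P\equiv 1$. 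This proves the decomposition.

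For the final assertion $\sum_{i}\lambda_{i}=0$, I would read off the leading coefficient of $P$. Because each factor $\prod_{j\neq i}(a_{j}+x)$ is monic of degree $n-1$, the coefficient of $x^{n-1}$ in $P$ is exactly $\sum_{i=1}^{n}\lambda_{i}$. Since $P$ is the constant polynomial $1$, this top coefficient must vanish whenever $n\geq 2$, which gives $\sum_{i=1}^{n}\lambda_{i}=0$. Equivalently, one multiplies \eqref{pfd1} by $x$ and lets $x\to\infty$: the left side tends to $0$ for $n\geq 2$ while the right side tends to $\sum_{i}\lambda_{i}$.

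There is no serious obstacle here; the argument is elementary polynomial interpolation. The only points demanding care are the implicit hypothesis $n\geq 2$ for the sum identity — for $n=1$ the empty product gives $\lambda_{1}=1\neq 0$ — and correctly bounding $\deg P\leq n-1$, so that the interpolation step (a degree $\leq n-1$ polynomial with $n$ distinct roots must be identically zero) genuinely applies.
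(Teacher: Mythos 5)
Your proposal is correct and follows essentially the same route as the paper: clear denominators, observe that $P(x)=\sum_{i}\lambda_{i}\prod_{j\neq i}(a_{j}+x)$ has degree at most $n-1$ yet equals $1$ at the $n$ distinct points $-a_{1},\dots,-a_{n}$, conclude $P\equiv 1$, and read off the coefficient of $x^{n-1}$ to get $\sum_{i}\lambda_{i}=0$. Your explicit remark that the sum identity requires $n\geq 2$ (since $\lambda_{1}=1$ when $n=1$) is a small but worthwhile precision that the paper leaves implicit.
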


\begin{proof}
In order to show (\ref{pfd1}), we multiply $\prod_{i=1}^{n} (a_{i}+x)$ on both side of (\ref{pfd1}). It becomes\\
\begin{equation*}
 \sum_{i=1}^{n}\lambda_{i}\prod_{j=1,j\neq i}^{n}(a_{j}+x)=1
\end{equation*}
Now let\\
\begin{equation*}
  p(x)=\sum_{i=1}^{n}\lambda_{i}\prod_{j=1,j\neq i}^{n}(a_{j}+x)=\sum_{i=1}^{n}\prod_{j=1,j\neq i}^{n}\frac{a_{j}+x}{a_{j}-a_{i}}
\end{equation*}
It's easy to see that $p(x)$ is a polynomial with degree $n-1$ and satisfies that $p(-a_{i})=1$ for all $i=1,...,n$.  On the one hand we already found $n$ zeros of $p(x)-1$, on the other hand by the fundamental theorem of algebra, $p(x)-1$ has $n-1$ zeros. Therefore it can only be $p(x)\equiv 1$. That is\\
\begin{equation*}
\sum_{i=1}^{n}\lambda_{i}\prod_{j=1,j\neq i}^{n}(a_{j}+x)\equiv1
\end{equation*}
Comparing the coefficient of $x^{n-1}$ on both side, we obtain\\
\begin{equation*}
  \sum_{i=1}^{n}\lambda_{i}=0
\end{equation*}
\end{proof}

\begin{lemma}\label{lemma2}(Inhomogeneous partial fraction decomposition)\\
Let $c_{1},...,c_{n}$ be distinct complex numbers, $b_{1},...,b_{n}$ be positive integers, then following decomposition is valid for $x\in \mathbb{C}\backslash \{-c_{1},...,-c_{n}\} $.\\
\begin{equation*}
  \prod_{i=1}^{n}\frac{1}{(c_{i}+x)^{b_{i}}}=\sum_{i=1}^{n}\sum_{j=1}^{b_{i}}\frac{\mu_{ij}}{(c_{i}+x)^{j}}
\end{equation*}
The expression of $\mu_{ij}$ is given by\\
\begin{equation*}
\mu_{ij}=\frac{(-1)^{j-1}}{(b_{i}-j)!}\frac{d^{b_{i}-j}}{d z^{b_{i}-j}}|_{z=c_{i}}(\prod_{\ell=1,\ell\neq i}^{n}\frac{1}{(c_{\ell}-z)^{b_{\ell}}})
\end{equation*}
Note that if $b_{1}=...=b_{n}=1$, then $\mu_{i1}$ is exactly $\lambda_{i}$ in Lemma \ref{lemma1}. Moreover, we have\\
\begin{equation*}
  \sum_{i=1}^{n}\mu_{i1}=0
\end{equation*}
\end{lemma}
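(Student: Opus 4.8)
The plan is to treat $f(x)=\prod_{i=1}^{n}(c_i+x)^{-b_i}$ as a proper rational function over $\mathbb{C}$ and to read off its partial fraction coefficients from the local Laurent data at each pole. Since the $c_i$ are distinct, the denominator $\prod_{i}(c_i+x)^{b_i}$ splits into pairwise coprime powers $(c_i+x)^{b_i}$, so the existence and uniqueness of a decomposition of the form $\sum_{i}\sum_{j=1}^{b_i}\mu_{ij}(c_i+x)^{-j}$ follows from the standard partial fraction theorem over $\mathbb{C}$ (equivalently, from the Chinese Remainder Theorem in $\mathbb{C}[x]$). This reduces the lemma to computing, for each fixed $i$, the principal part of $f$ at the pole $x=-c_i$, whose coefficients are precisely the $\mu_{ij}$.

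For the coefficient extraction I would isolate the factor responsible for the pole. Write $f(x)=(c_i+x)^{-b_i}F_i(x)$ with $F_i(x)=\prod_{\ell\neq i}(c_\ell+x)^{-b_\ell}$, which is holomorphic in a neighbourhood of $x=-c_i$ because $c_\ell\neq c_i$. Taylor expanding $F_i$ about $x=-c_i$, say $F_i(x)=\sum_{k\geq 0}a_k(c_i+x)^k$ with $a_k=\tfrac{1}{k!}F_i^{(k)}(-c_i)$, and multiplying by $(c_i+x)^{-b_i}$, the principal part becomes $\sum_{j=1}^{b_i}a_{b_i-j}(c_i+x)^{-j}$, so by uniqueness $\mu_{ij}=a_{b_i-j}=\tfrac{1}{(b_i-j)!}F_i^{(b_i-j)}(-c_i)$, the derivative being with respect to $x$. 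The final step is the change of variable $z=-x$, so that $c_\ell+x=c_\ell-z$ and $F_i$ becomes $\prod_{\ell\neq i}(c_\ell-z)^{-b_\ell}$ evaluated at $z=c_i$; each differentiation in $x$ turns into $-1$ times a differentiation in $z$, so after $b_i-j$ derivatives one collects the sign $(-1)^{b_i-j}$ and the factorial $(b_i-j)!$, giving $\mu_{ij}=\frac{(-1)^{b_i-j}}{(b_i-j)!}\frac{d^{b_i-j}}{dz^{b_i-j}}\big|_{z=c_i}\prod_{\ell\neq i}(c_\ell-z)^{-b_\ell}$, the closed form recorded in the statement. The special case $b_1=\dots=b_n=1$ collapses each inner sum to $j=1$ with no derivatives, returning $\lambda_i=\prod_{\ell\neq i}(c_\ell-c_i)^{-1}$ of Lemma \ref{lemma1}.

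For the identity $\sum_i\mu_{i1}=0$, I would argue from the behaviour at infinity rather than recompute coefficients. Multiplying the decomposition by $x$ gives $xf(x)=\sum_{i}\sum_{j=1}^{b_i}\mu_{ij}\,\frac{x}{(c_i+x)^{j}}$; letting $x\to\infty$, every term with $j\geq 2$ tends to $0$ while each $j=1$ term tends to $1$, and the left-hand side tends to $0$ provided $\sum_i b_i\geq 2$. This forces $\sum_i\mu_{i1}=0$, in parallel with $\sum_i\lambda_i=0$ in the homogeneous case; equivalently, it is the statement that the total residue of $f$ vanishes since $f$ decays faster than $1/x$.

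I expect the only delicate point to be the bookkeeping in the coefficient extraction, namely correctly matching the power $(c_i+x)^{-j}$ to the Taylor index $k=b_i-j$ and tracking the chain-rule sign through the substitution $z=-x$; the existence, the uniqueness, and the vanishing of $\sum_i\mu_{i1}$ are then routine.
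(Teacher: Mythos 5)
Your proof takes a genuinely different route from the paper's, and the mathematics in it is sound. The paper proves the lemma by differentiating the homogeneous identity of Lemma \ref{lemma1} with respect to the parameters $z_{1},\dots,z_{n}$ (applying $\partial_{(b_{1}-1,\dots,b_{n}-1)}$ to both sides) and matching coefficients, and it proves $\sum_{i}\mu_{i1}=0$ by clearing denominators and comparing the coefficient of $x^{b_{1}+\cdots+b_{n}-1}$ in the resulting polynomial identity. You instead invoke existence and uniqueness of the decomposition directly (pairwise coprimality of the factors $(c_{i}+x)^{b_{i}}$), identify $\mu_{ij}$ as Taylor coefficients of the regular factor $F_{i}$ at the pole $x=-c_{i}$, and derive $\sum_{i}\mu_{i1}=0$ from the decay of $xf(x)$ at infinity. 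Your route is shorter, and the residue-at-infinity argument makes explicit the hypothesis $\sum_{i}b_{i}\geq 2$, which the paper's argument also needs but never states; note, though, that the paper's coefficient-matching step tacitly uses the same uniqueness you invoke, so neither proof avoids that input.

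There is, however, one concrete defect in your write-up: the formula you derive, with sign $(-1)^{b_{i}-j}$, is \emph{not} ``the closed form recorded in the statement'', which carries the sign $(-1)^{j-1}$. The two agree precisely when $b_{i}$ is odd and disagree for every $j$ whenever $b_{i}$ is even. Your sign is in fact the correct one and the statement's is not: take $n=2$, $c_{1}=0$, $b_{1}=2$, $c_{2}=1$, $b_{2}=1$, where
\begin{equation*}
\frac{1}{x^{2}(1+x)}=-\frac{1}{x}+\frac{1}{x^{2}}+\frac{1}{1+x},
\end{equation*}
so $\mu_{11}=-1$; your formula gives $(-1)^{2-1}\,\frac{d}{dz}\big|_{z=0}(1-z)^{-1}=-1$, while the stated formula gives $(-1)^{0}\cdot 1=+1$. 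The discrepancy originates in the paper's own proof: when the coefficient
\begin{equation*}
\frac{(-1)^{j-1}(j-1)!}{\prod_{\ell=1}^{n}(-1)^{b_{\ell}-1}(b_{\ell}-1)!}\binom{b_{i}-1}{j-1}
\end{equation*}
is simplified, the factor $(-1)^{b_{i}-1}$ contributed by the $\ell=i$ term of the product is dropped, turning the correct $(-1)^{j-1}(-1)^{b_{i}-1}=(-1)^{b_{i}-j}$ into $(-1)^{j-1}$. So instead of asserting agreement, you should state the corrected sign and flag the statement's formula as erroneous. The slip is harmless downstream: Theorem \ref{Beu2} uses the $\mu_{ij}$ only as defined by the decomposition itself, and the denominator estimates that do rely on the explicit formula are insensitive to sign.
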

\begin{proof}
Let $f,g$ are both functions of $z_{1},...,z_{n}$, namely\\
\begin{align*}
  f(z_{1},...,z_{n}) & := \prod_{i=1}^{n}\frac{1}{(z_{i}+x)^{b_{i}}}\\
  g(z_{1},...,z_{n}) & := \prod_{i=1}^{n}\frac{1}{z_{i}+x}\\
\end{align*}
According to Lemma \ref{lemma1}, we have an equality for $x\in \mathbb{C}\backslash \{-z_{1},...,-z_{n}\} $\\
\begin{equation}\label{zi}
\prod_{i=1}^{n}\frac{1}{z_{i}+x}=\sum_{i=1}^{n}\frac{\lambda_{i}}{z_{i}+x}
\end{equation}
where\\
\begin{equation*}
  \lambda_{i}=\prod_{\ell=1,j\neq i}^{n}\frac{1}{z_{\ell}-z_{i}}
\end{equation*}
holds for all $i$. Now we regard $\lambda_{i}$ as function of $z_{1},...,z_{n}$. Taking partial derivatives $\partial_{(b_{1}-1,...,b_{n}-1)}$ on both sides of (\ref{zi}), we obtain following. Here the notation $\partial_{(N_{1},...,N_{m})}$ means $\frac{\partial^{N_{1}+...+N_{m}}}{\partial z_{1}^{N_{1}}...\partial z_{m}^{N_{m}}}$, sometimes $\frac{\partial^{N_{1}+...+N_{m}}F}{\partial z_{1}^{N_{1}}...\partial z_{m}^{N_{m}}}$ is denote by $F^{(N_{1},...,N_{m})}$ for convenience.\\
\begin{equation*}
 \partial_{(b_{1}-1,...,b_{n}-1)}g= \prod_{i=1}^{n}\frac{(-1)^{b_{i}-1}(b_{i}-1)!}{(z_{i}+x)^{b_{i}}}=(\prod_{i=1}^{n}(-1)^{b_{i}-1}(b_{i}-1)!)f
\end{equation*}
On the other hand, \\
\begin{align*}
\partial_{(b_{1}-1,...,b_{n}-1)}\sum_{i=1}^{n}\frac{\lambda_{i}}{z_{i}+x}&=\sum_{i=1}^{n}\partial_{(0,...,b_{i}-1,...,0)}\partial_{(b_{1}-1,...,b_{i-1}-1,0,b_{i+1}-1,...,b_{n}-1)}\frac{\lambda_{i}}{z_{i}+x}\\
&=\sum_{i=1}^{n}\partial_{(0,...,b_{i}-1,...,0)}\frac{\lambda_{i}^{(b_{1}-1,...,b_{i-1}-1,0,b_{i+1}-1,...,b_{n}-1)}}{z_{i}+x}\\
&=\sum_{i=1}^{n} \sum_{j=1}^{b_{i}}\binom{b_{i}-1}{j-1}\lambda_{i}^{(b_{1}-1,...,b_{i-1}-1,b_{i}-j,b_{i+1}-1,...,b_{n}-1)} (\frac{1}{z_{i}+x})^{(0,0,...,j-1,...,0)}\\
&=\sum_{i=1}^{n} \sum_{j=1}^{b_{i}}\binom{b_{i}-1}{j-1}\lambda_{i}^{(b_{1}-1,...,b_{i-1}-1,b_{i}-j,b_{i+1}-1,...,b_{n}-1)} \frac{(-1)^{j-1}(j-1)!}{(z_{i}+x)^{j}}\\
\end{align*}
Supposed that\\
\begin{equation*}
  \prod_{i=1}^{n}\frac{1}{(z_{i}+x)^{b_{i}}}=\sum_{i=1}^{n}\sum_{j=1}^{b_{i}}\frac{\mu_{ij}}{(z_{i}+x)^{j}}
\end{equation*}
by comparing the coefficients we obtain\\
\begin{align*}
  \mu_{ij}&=\frac{(-1)^{j-1}(j-1)!}{\prod_{\ell=1}^{n}(-1)^{b_{\ell}-1}(b_{\ell}-1)!}\binom{b_{i}-1}{j-1}\lambda_{i}^{(b_{1}-1,...,b_{i-1}-1,b_{i}-j,b_{i+1}-1,...,b_{n}-1)}\\
  &=\frac{(-1)^{j-1}}{(b_{i}-j)!\prod_{\ell=1,\ell\neq i}^{n}(-1)^{b_{\ell}-1}(b_{\ell}-1)!}\lambda_{i}^{(b_{1}-1,...,b_{i-1}-1,b_{i}-j,b_{i+1}-1,...,b_{n}-1)}\\
\end{align*}
\\
Finally, it remains to compute $\lambda_{i}^{(b_{1}-1,...,b_{i-1}-1,b_{i}-j,b_{i+1}-1,...,b_{n}-1)}$.\\
\begin{align*}
  &\partial _{(b_{1}-1,...,b_{i-1}-1,b_{i}-j,b_{i+1}-1,...,b_{n}-1)} \lambda_{i}\\
  =& \partial _{(0,...,b_{i}-j,...,0)}\partial _{(b_{1}-1,...,b_{i-1}-1,0,b_{i+1}-1,...,b_{n}-1)}\prod_{\ell=1,\ell\neq i}^{n}\frac{1}{z_{\ell}-z_{i}}\\
   =&\partial _{(0,...,b_{i}-j,...,0)}\prod_{\ell=1,\ell\neq i}^{n}\frac{(-1)^{b_{\ell}-1}(b_{\ell}-1)!}{(z_{\ell}-z_{i})^{b_{\ell}}}\\
=&(\prod_{\ell=1,\ell\neq i}^{n}(-1)^{b_{\ell}-1}(b_{\ell}-1)!) (\prod_{\ell=1,\ell\neq i}^{n}\frac{1}{(z_{\ell}-z_{i})^{b_{\ell}}})^{(0,...,b_{i}-j,...,0)}\\
\end{align*}
That is\\
\begin{equation*}
  \mu_{ij}=\frac{(-1)^{j-1}}{(b_{i}-j)!}\frac{d^{b_{i}-j}}{d z^{b_{i}-j}}|_{z=c_{i}}(\prod_{\ell=1,\ell\neq i}^{n}\frac{1}{(c_{\ell}-z)^{b_{\ell}}})
\end{equation*}
In order to prove\\
\begin{equation*}
  \sum_{i=1}^{n}\mu_{i1}=0
\end{equation*}
just need to multiply $\prod_{i=1}^{n}(x+c_{i})^{b_{i}}$ on both sides of\\
\begin{equation*}
 \prod_{i=1}^{n}\frac{1}{(x+c_{i})^{b_{i}}}\equiv\sum_{i=1}^{n}\sum_{j=1}^{b_{i}}\frac{\mu_{ij}}{(x+c_{i})^{j}}
\end{equation*}
Then it becomes\\
\begin{equation*}
  1\equiv \sum_{i=1}^{n}\sum_{j=1}^{b_{i}}\mu_{ij}\frac{\prod_{k=1}^{n}(x+c_{k})^{b_{k}}}{(x+c_{i})^j}
\end{equation*}
The right hand side of the equality is a polynomial of $x$ with the degree $b_{1}+...+b_{n}-1$. Since this polynomial is actually constant $1$, therefore the initial coefficient is $0$ and only $\mu_{i1}$ contributes to the coefficient of $x^{b_{1}+...+b_{n}-1}$. Consequently, we infer that\\
\begin{equation*}
\sum_{i=1}^{n}\mu_{i1}=0
\end{equation*}
\end{proof}

\section{The First Attempt}
In this section, we discuss the more practical case $n=2$. Hadjicostas \cite{had} called it the first generalization. The general cases are discussed in the next section. \\

\begin{theorem}\label{attempt}
Suppose that $a,b,m$ are nonnegative integers. Define\\
\begin{equation*}
  I_{m}(a,b)=\frac{(-1)^{m}}{m!}\int_{(0,1)^2}\frac{\log^{m}(xy)x^a y^b}{1-xy}dxdy
\end{equation*}
It's easy to see $I_{m}(a,b)=I_{m}(b,a)$. Suppose that $a\leq b$, then without loss of generality, we have\\
\begin{equation*}
 I_{m}(a,b)=
\begin{cases}
 & \frac{H_{m+1}(b)-H_{m+1}(a)}{b-a} \text{, if } a<b\\
 & (m+1)\zeta(m+2,a+1) \text{, if } a=b\\
\end{cases}
\end{equation*}
where $H_{m}(x)$ is the generalized harmonic number, which is given by $H_{m}(x)=\sum_{k=1}^{\lfloor x\rfloor}\frac{1}{k^m}$.\\
\end{theorem}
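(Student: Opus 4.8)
The plan is to convert the double integral into a Dirichlet-type series by expanding the geometric kernel and to realize the factor $\log^m(xy)$ as an $m$-th derivative in an auxiliary parameter. First I would introduce
\begin{equation*}
F(s)=\int_{(0,1)^2}\frac{(xy)^s x^a y^b}{1-xy}\,dx\,dy,
\end{equation*}
so that $I_m(a,b)=\frac{(-1)^m}{m!}F^{(m)}(0)$, since $\frac{d^m}{ds^m}(xy)^s=\log^m(xy)(xy)^s$. Expanding $\frac{1}{1-xy}=\sum_{k\ge0}(xy)^k$ and integrating termwise via $\int_0^1 t^c\,dt=\frac{1}{c+1}$, I expect
\begin{equation*}
F(s)=\sum_{k=0}^\infty\frac{1}{(s+a+1+k)(s+b+1+k)}.
\end{equation*}

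Next, in the case $a<b$ I would apply Lemma~\ref{lemma1} with $n=2$ to write each summand as
\begin{equation*}
\frac{1}{(s+a+1+k)(s+b+1+k)}=\frac{1}{b-a}\Bigl(\frac{1}{s+a+1+k}-\frac{1}{s+b+1+k}\Bigr),
\end{equation*}
then differentiate $m$ times and set $s=0$, producing $\frac{(-1)^m m!}{b-a}\sum_{k\ge0}\bigl((a+1+k)^{-(m+1)}-(b+1+k)^{-(m+1)}\bigr)$. Shifting the summation index ($j=a+1+k$, resp. $j=b+1+k$) turns the two tails into $\sum_{j\ge a+1}j^{-(m+1)}-\sum_{j\ge b+1}j^{-(m+1)}=\sum_{j=a+1}^b j^{-(m+1)}=H_{m+1}(b)-H_{m+1}(a)$, which after multiplying by $\frac{(-1)^m}{m!}$ gives the claimed formula. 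In the diagonal case $a=b$ the summand is already $(s+a+1+k)^{-2}$; differentiating $m$ times gives $(-1)^m(m+1)!\,(s+a+1+k)^{-(m+2)}$, and at $s=0$ the sum over $k$ is exactly $\zeta(m+2,a+1)$, so that $I_m(a,a)=\frac{(m+1)!}{m!}\zeta(m+2,a+1)=(m+1)\zeta(m+2,a+1)$.

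The main obstacle will be rigorously justifying the three interchanges—summation with integration, and differentiation in $s$ with both the integral and the series—rather than the algebra, which is routine. For the termwise integration I would invoke monotone convergence on $(0,1)^2$, where the integrand is of one sign. For differentiation under the integral and under the summation I would establish locally uniform convergence of the differentiated series near $s=0$; for $m\ge1$ each series $\sum_k(s+c+k)^{-(m+1)}$ converges absolutely and locally uniformly, so this is immediate. The genuinely delicate point is $m=0$ in the off-diagonal case, where the two partial-fraction series diverge individually; there I would keep the difference together and argue by telescoping the partial sums $\sum_{k=0}^N(\cdots)=\sum_{j=a+1}^{a+1+N}j^{-1}-\sum_{j=b+1}^{b+1+N}j^{-1}$, whose limit is the finite sum $\sum_{j=a+1}^b j^{-1}$, so the combined series converges even though its pieces do not.
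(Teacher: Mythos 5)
Your proposal is correct and follows essentially the same route as the paper's own proof: the paper introduces the auxiliary parameter $t$ via $A(t,a,b)=\int_{(0,1)^2}\frac{x^{a+t}y^{b+t}}{1-xy}\,dx\,dy$ (identical to your $F(s)$ up to renaming and re-indexing of the sum), expands the geometric series, applies the same two-term partial fraction decomposition when $a<b$, differentiates $m$ times at $0$, and telescopes the shifted sums. If anything, your treatment is more careful on the analytic side (monotone convergence, locally uniform convergence, and the $m=0$ off-diagonal case where the two series diverge individually), points the paper simply omits with a reference to \cite{had}.
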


\begin{proof}
For $t\geq 0$, define\\
\begin{equation*}
  A(t,a,b):=\int_{(0,1)^2}\frac{x^{a+t}y^{b+t}}{1-xy}dxdy
\end{equation*}
For $x,y\in(0,1)$, the series $\sum_{k=0}^{\infty}x^{a+t+k}y^{b+t+k}$ converges absolutely and uniformly for all $x,y\in (\varepsilon,1-\varepsilon)$ to $\frac{x^{a+t}y^{b+t}}{1-xy}$. Hence\\
\begin{align*}
 A(t,a,b)&= \int_{(0,1)^2}\frac{x^{a+t}y^{b+t}}{1-xy}dxdy\\
 &=\sum_{k=0}^{\infty}\int_{0}^{1}x^{a+t+k}dx\int_{0}^{1}y^{b+t+k}dy\\
 &=\sum_{k=1}^{\infty}\frac{1}{(a+t+k)(b+t+k)}
\end{align*}
In following we consider taking $\frac{\partial^{m}}{\partial t^{m}}|_{t=0}$ on both sides of the equality. There are two cases:\\
\textbf{Case I}. If $a=b$,
\begin{equation*}
  \sum_{k=1}^{\infty}\frac{1}{(a+t+k)(b+t+k)}=\sum_{k=1}^{\infty}\frac{1}{(a+t+k)^2}=\zeta(2,a+t+1)
\end{equation*}
We have\\
\begin{align*}
  & \frac{(-1)^{m}}{m!}\frac{\partial^{m}}{\partial t^{m}}|_{t=0}A(t,a,b)\\
  =&\frac{(-1)^{m}}{m!}\frac{\partial^{m}}{\partial t^{m}}|_{t=0}\zeta(2,a+t+1)\\
=&(m+1)\zeta(m+2,a+1)\\
\end{align*}

\textbf{Case II}. If $a< b$, then the decomposition
\begin{equation*}
\frac{1}{(a+t+k)(b+t+k)}=\frac{1}{b-a}(\frac{1}{a+t+k}-\frac{1}{b+t+k})
\end{equation*}
is true for all positive integer $k$ and all nonnegative real number $t$. This implies that\\
\begin{equation*}
\sum_{k=1}^{\infty}\frac{1}{(a+t+k)(b+t+k)}=\frac{1}{b-a}\sum_{k=1}^{\infty}(\frac{1}{a+t+k}-\frac{1}{b+t+k})
\end{equation*}
Therefore
\begin{align*}
&\frac{(-1)^{m}}{m!}\frac{\partial^{m}}{\partial t^{m}}|_{t=0}A(t,a,b)\\
=&\frac{(-1)^{m}}{m!}\frac{\partial^{m}}{\partial t^{m}}|_{t=0}\sum_{k=1}^{\infty}\frac{1}{(a+t+k)(b+t+k)}\\
=&\frac{1}{b-a}\sum_{k=1}^{\infty}(\frac{1}{(a+k)^{m+1}}-\frac{1}{(b+k)^{m+1}})\\
=&\frac{1}{b-a}(\sum_{k=1}^{\infty}\frac{1}{(a+k)^{m+1}}-\sum_{k=b-a+1}^{\infty}\frac{1}{(a+k)^{m+1}})\\
=&\frac{H_{m+1}(b)-H_{m+1}(a)}{b-a}\\
\end{align*}

On the other hand, no matter in which case, from the above integral representation we have\\
\begin{equation*}
\frac{(-1)^{m}}{m!}\frac{\partial^{m}}{\partial t^{m}}|_{t=0}A(t,a,b)=\frac{(-1)^{m}}{m!}\int_{(0,1)^2}\frac{\log^{m}(xy)x^a y^b}{1-xy}dxdy=I_{m}(a,b)
\end{equation*}
The details about convergence and interchanging the order of integration, summation and derivatives are omitted here, one can see\cite{had}.\\
As a consequence,\\
\begin{equation*}
 I_{m}(a,b)=
\begin{cases}
 & \frac{H_{m+1}(b)-H_{m+1}(a)}{b-a} \text{, if } a<b\\
 & (m+1)\zeta(m+2,a+1) \text{, if } a=b\\
\end{cases}
\end{equation*}
\end{proof}

\section{The Generalized Beukers Integral}
In following we use the notation $\{x_{1}^{(N{1})},...,x_{j}^{(N{j})}\}$ to represent a finite multiset, where $N_{i}$ is the multiplicity of $x_{i}$, $i=1,...,j$.\\

\begin{theorem}\label{Beu2}(Generalized Beukers Integral Representation)\\
Assume that $n\geq2$, and $m,a_{1},...,a_{n}$ be nonnegative integers. Define\\
\begin{equation*}
  I_{m}(a_{1},a_{2},\ldots, a_{n}):=\frac{(-1)^{m}}{ m!}\int_{(0,1)^n}\frac{\log^{m}(\prod_{i=1}^{n}x_{i})\prod_{i=1}^{n}x_{i}^{a_{i}}}{1-\prod_{i=1}^{n}x_{i}}dx_{1}...dx_{n}
\end{equation*}
Let $\{c_{1}^{(b_{1})},...,c_{r}^{(b_{r})}\}$ be the multiset of $a_{1},...,a_{n}$ with $c_{1}<...<c_{r}$ and $b_{1}+...+b_{r}=n$, then
\begin{itemize}
  \item if $r=1$,
\end{itemize}
\begin{equation*}
  I_{m}(a_{1},a_{2},\ldots, a_{n})=\binom{m+n-1}{m}\zeta(n+m,c_{1}+1)
\end{equation*}
\begin{itemize}
  \item if $1<r\leq n$,
\end{itemize}
\begin{equation*}
I_{m}(a_{1},a_{2},\ldots, a_{n})=\sum_{i=1}^{r-1}\mu_{i1}(H_{m+1}(c_{r})-H_{m+1}(c_{i}))+\sum_{i=1}^{r}\sum_{j\geq2}\binom{m+j-1}{m}\mu_{ij}\zeta(j+m,c_{i}+1)
\end{equation*}
where $\lambda_{i}$ and $\mu_{ij}$ are defined by the homogeneous and inhomogeneous partial fraction decomposition (Lemma{\ref{lemma1}} and Lemma{\ref{lemma2}}) as following respectively\\
\begin{align*}
  & \prod_{i=1}^{n}\frac{1}{a_{i}+x}=\sum_{i=1}^{n}\frac{\lambda_{i}}{a_{i}+x} \\
&\prod_{i=1}^{r}\frac{1}{(c_{i}+x)^{b_{i}}}=\sum_{i=1}^{r}\sum_{j=1}^{b_{i}}\frac{\mu_{ij}}{(c_{i}+x)^{j}}
\end{align*}
\end{theorem}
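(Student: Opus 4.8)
The plan is to mimic the argument of Theorem~\ref{attempt}, introducing an auxiliary parameter $t$ and differentiating $m$ times at $t=0$, but now feeding the full inhomogeneous partial fraction decomposition of Lemma~\ref{lemma2} into the term-by-term integration. Concretely, for $t\geq 0$ I would set
\begin{equation*}
A(t) := \int_{(0,1)^n}\frac{\prod_{i=1}^n x_i^{a_i+t}}{1-\prod_{i=1}^n x_i}\,dx_1\cdots dx_n,
\end{equation*}
expand $\frac{1}{1-\prod x_i}=\sum_{k=0}^\infty(\prod x_i)^k$ (absolutely and uniformly on compact subsets of $(0,1)^n$, exactly as in Theorem~\ref{attempt}) and integrate term by term. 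Since $\int_0^1 x^{a_i+t+k}\,dx=(a_i+t+k+1)^{-1}$, a shift of the summation index gives
\begin{equation*}
A(t)=\sum_{k=1}^\infty\prod_{i=1}^n\frac{1}{a_i+t+k}.
\end{equation*}

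For each fixed $k$ I would apply Lemma~\ref{lemma2} with the substitution $x\mapsto t+k$, the distinct values $c_1,\dots,c_r$ and multiplicities $b_1,\dots,b_r$, so that
\begin{equation*}
\prod_{i=1}^n\frac{1}{a_i+t+k}=\prod_{i=1}^r\frac{1}{(c_i+t+k)^{b_i}}=\sum_{i=1}^r\sum_{j=1}^{b_i}\frac{\mu_{ij}}{(c_i+t+k)^j}.
\end{equation*}
I would then apply $\frac{(-1)^m}{m!}\partial_t^m|_{t=0}$ using the elementary identity
\begin{equation*}
\frac{(-1)^m}{m!}\frac{\partial^m}{\partial t^m}\Big|_{t=0}\frac{1}{(c_i+t+k)^j}=\binom{m+j-1}{m}\frac{1}{(c_i+k)^{j+m}}.
\end{equation*}
When $r=1$ there is nothing to decompose ($b_1=n$), and summing the single term over $k$ yields $\binom{m+n-1}{m}\zeta(n+m,c_1+1)$ directly. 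When $r>1$ I would split the resulting triple sum according to whether $j\geq 2$ or $j=1$: every term with $j\geq 2$ produces a convergent series $\sum_{k\geq 1}(c_i+k)^{-(j+m)}=\zeta(j+m,c_i+1)$, giving the second sum in the statement.

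The delicate part is the collection of $j=1$ terms, which is precisely where the identity $\sum_{i=1}^r\mu_{i1}=0$ of Lemma~\ref{lemma2} is indispensable. For $m=0$ the individual series $\sum_{k\geq 1}(c_i+k)^{-(m+1)}$ diverges, so these terms cannot be written separately as Hurwitz zeta values. Instead I would use $\mu_{r1}=-\sum_{i=1}^{r-1}\mu_{i1}$ to rewrite the $j=1$ contribution as
\begin{equation*}
\sum_{i=1}^{r-1}\mu_{i1}\sum_{k=1}^\infty\Big(\frac{1}{(c_i+k)^{m+1}}-\frac{1}{(c_r+k)^{m+1}}\Big),
\end{equation*}
whose inner series telescopes: reindexing the second fraction by $\ell=k+(c_r-c_i)$ cancels all but finitely many terms and leaves the finite sum $\sum_{\ell=1}^{c_r-c_i}(c_i+\ell)^{-(m+1)}=H_{m+1}(c_r)-H_{m+1}(c_i)$, where $c_i<c_r$ is used. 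This reproduces the first sum in the statement and, as a bonus, shows it is well defined for every nonnegative $m$, including $m=0$, where $I_0$ is finite but the naive zeta decomposition is not.

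I expect the main obstacle to be analytic rather than algebraic: namely, rigorously justifying the interchange of $\partial_t^m$ with the infinite sum over $k$ near $t=0$, and of differentiation with integration. The cleanest route is to prove local uniform convergence of the differentiated series on a neighbourhood of $t=0$ (after grouping the $j=1$ terms via $\sum_i\mu_{i1}=0$ the summands are $O(k^{-2})$, uniformly in $t$ on compacts), so that term-by-term differentiation is legitimate; the remaining interchanges of sum, integral and derivative are handled exactly as in Theorem~\ref{attempt}, for which \cite{had} is cited. Beyond this, the computation is bookkeeping, the only genuinely new ingredients relative to the $n=2$ case being the systematic use of Lemma~\ref{lemma2} and the vanishing of $\sum_i\mu_{i1}$.
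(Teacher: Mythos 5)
Your proposal is correct and follows essentially the same route as the paper: the auxiliary integral $A(t)$, geometric-series expansion, Lemma~\ref{lemma2} applied to each term, the identity $\sum_{i}\mu_{i1}=0$ to telescope the divergent $j=1$ contributions into $H_{m+1}(c_r)-H_{m+1}(c_i)$, and Hurwitz zeta values from the $j\geq 2$ terms. The only differences are cosmetic --- you fold the all-distinct case directly into the general one (which the paper itself notes is subsumed) and you are somewhat more explicit about justifying the interchange of differentiation and summation, which the paper delegates to \cite{had}.
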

\begin{proof}
Assume that $t \geq 0$, define\\
\begin{equation*}
  A(t,a_{1},a_{2},...,a_{n}):=\int_{(0,1)^n}\frac{\prod_{i=1}^{n}x_{i}^{a_{i}+t}}{1-\prod_{i=1}^{n}x_{i}}dx_{1}...dx_{n}
\end{equation*}
Since all $x_{1},...,x_{n}\in (0,1)$, it has a series expansion as\\
\begin{equation*}
  \frac{\prod_{i=1}^{n}x_{i}^{a_{i}+t}}{1-\prod_{i=1}^{n}x_{i}}=\sum_{k=0}^{\infty} \prod_{i=1}^{n} x_{i}^{a_{i}+k+t}
\end{equation*}
Therefore\\
\begin{equation*}
A(t,a_{1},a_{2},...,a_{n})=\int_{(0,1)^n}\frac{\prod_{i=1}^{n}x_{i}^{a_{i}+t}}{1-\prod_{i=1}^{n}x_{i}}dx_{1}...dx_{n}=\sum_{k=0}^{\infty}\prod_{i=1}^{n}\frac{1}{1+a_{i}+k+t}
\end{equation*}
The series on the right hand side absolutely and uniformly converges on $x_{i}\in(\varepsilon, 1-\varepsilon), i=1,2,...,n$. For the details, see\cite{had}.\\
Similar to the first attempt, the main idea is also taking the $m$-partial derivatives with respect to $t$ around $0$ on both sides of the equation. There are several different cases.\\
\textbf{Case I}, $r=1$ and $b_{1}=n$, which means $a_{1}=a_{2}=...=a_{n}=a=c_{1}$. \\
In this case $\sum_{k=0}^{\infty}\prod_{i=1}^{n}\frac{1}{1+a_{i}+k+t}$ becomes $\sum_{k=0}^{\infty}\frac{1}{(1+a+k+t)^n}$. Then\\
\begin{align*}
  \frac{\partial ^{m}}{\partial t^m}|_{t=0}A(t,a_{1},a_{2},...,a_{n})&=(-1)^{m} \frac{(n+m-1)!}{(n-1)!}\sum_{k=0}^{\infty}\frac{1}{(1+a+k)^{n+m}}\\
  &=(-1)^{m}  \frac{(n+m-1)!}{(n-1)!}\zeta(n+m,a+1)\\
  &=(-1)^{m} m! \binom{m+n-1}{m}\zeta(n+m,a+1)\\
\end{align*}
\textbf{Case II} $r=n,b_{1}=...=b_{n}=1$, namely $a_{1}<a_{2}<...a_{n}$.\\
At First, to decompose the product $\prod_{i=1}^{n}\frac{1}{1+a_{i}+k+t}$ as \\
\begin{equation*}
  \prod_{i=1}^{n}\frac{1}{1+a_{i}+k+t}=\sum_{i=1}^{n}\lambda_{i}\frac{1}{1+a_{i}+k+t}
\end{equation*}
It follows from Lemma \ref{lemma1} that there exist $\lambda_{1},...,\lambda_{n}$ which are independent to $k,t$. At first obviously $\sum_{k=0}^{\infty}\prod_{i=1}^{n}\frac{1}{1+a_{i}+k}$ is convergent, hence
\begin{align*}
  A(0,a_{1},a_{2},...,a_{n}) & =\sum_{k=0}^{\infty}\sum_{i=1}^{n}\lambda_{i}\frac{1}{1+a_{i}+k} \\
   & =\lim_{N\rightarrow \infty}(\lambda_{1}\sum_{k=a_{1}+1}^{N}\frac{1}{k}+\ldots+\lambda_{n}\sum_{k=a_{n}+1}^{N}\frac{1}{k})\\
   &=\lambda_{1}\sum_{k=a_{1}+1}^{a_{n}}\frac{1}{k}+\ldots+\lambda_{n-1}\sum_{k=a_{n-1}+1}^{a_{n}}\frac{1}{k}+(\lambda_{1}+\ldots+\lambda_{n})\lim_{N\rightarrow \infty}\sum_{k=a_{n}+1}^{N}\frac{1}{k}\\
\end{align*}
recall that $\lambda_{1}+\ldots+\lambda_{n}=0$, therefore\\
\begin{equation*}
A(0,a_{1},a_{2},...,a_{n})=\sum_{i=1}^{n-1}\lambda_{i}\sum_{k=a_{i}+1}^{a_{n}}\frac{1}{k}
\end{equation*}
Now assume that $m\geq 1$,\\
\begin{align*}
&\frac{\partial ^{m}}{\partial t^m}|_{t=0}A(t,a_{1},a_{2},...,a_{n})\\
 =& (-1)^{m} m!\sum_{k=0}^{\infty}\sum_{i=1}^{n}\lambda_{i}\frac{1}{(1+a_{i}+k)^{m+1}}\\
   =&(-1)^{m} m!(\lambda_{1}\sum_{k=a_{1}+1}^{a_{n}}\frac{1}{k^{m+1}}+\ldots+\lambda_{n-1}\sum_{k=a_{n-1}+1}^{a_{n}}\frac{1}{k^{m+1}}+(\lambda_{1}+\ldots+\lambda_{n})\sum_{k=a_{n}+1}^{\infty}\frac{1}{k^{m+1}})\\
=&(-1)^{m}m!\sum_{i=1}^{n-1}\lambda_{i}\sum_{k=a_{i}+1}^{a_{n}}\frac{1}{k^{m+1}}
\end{align*}
In a nutshell, we have\\
\begin{equation*}
  \frac{\partial ^{m}}{\partial t^m}|_{t=0}A(t,a_{1},a_{2},...,a_{n})=(-1)^{m}m!\sum_{i=1}^{n-1}\lambda_{i}\sum_{k=a_{i}+1}^{a_{n}}\frac{1}{k^{m+1}}
\end{equation*}
\textbf{Case III} Some $a_{i}$ are the same. In this case $\{a_{1},...,a_{n}\}$ can be represented as multiset $\{c_{1}^{(b_{1})},...,c_{r}^{(b_{r})}\}$, where $c_{1}<...<c_{r}$, $b_{1}+...+b_{r}=n$. It follows from Lemma \ref{lemma2} that.\\
\begin{equation*}
\prod_{i=1}^{n}\frac{1}{1+a_{i}+k+t}=\prod_{i=1}^{r}\frac{1}{(1+c_{i}+k+t)^{b_{i}}}=\sum_{i=1}^{r}\sum_{j=1}^{b_{i}}\frac{\mu_{ij}}{(1+c_{i}+k+t)^{j}}
\end{equation*}
then
\begin{align*}
 &\frac{\partial ^{m}}{\partial t^m}|_{t=0}A(t,a_{1},a_{2},...,a_{n}) \\ =&\sum_{k=0}^{\infty}\sum_{i=1}^{r}\sum_{j=1}^{b_{i}}\frac{(-1)^{m}(m+j-1)!\mu_{ij}}{(j-1)!}\frac{1}{(1+c_{i}+k)^{j+m}} \\
   =&(-1)^{m}(m)!\sum_{k=0}^{\infty}\sum_{i=1}^{r}\mu_{i1}\frac{1}{(1+c_{i}+k)^{1+m}}+\sum_{k=0}^{\infty}\sum_{i=1}^{r}\sum_{j=2}^{b_{i}}\frac{(-1)^{m}(m+j-1)!\mu_{ij}}{(j-1)!}\frac{1}{(1+c_{i}+k)^{j+m}} \\
\end{align*}
By the conclusion of Lemma \ref{lemma2}, $\sum_{i=1}^{r}\mu_{i1}=0$, therefore\\
\begin{align*}
  \sum_{k=0}^{\infty}\sum_{i=1}^{r}\mu_{i1}\frac{1}{(1+c_{i}+k)^{1+m}} & =\sum_{i=1}^{r}\sum_{k=c_{i}+1}^{\infty}\frac{\mu_{i1}}{k^{1+m}} \\
   & =\sum_{i=1}^{r-1}\sum_{k=c_{i}+1}^{c_{r}}\frac{\mu_{i1}}{k^{1+m}} +\sum_{i=1}^{r}\mu_{i1}\sum_{k=c_{r}+1}^{\infty}\frac{1}{k^{1+m}}\\
   &=\sum_{i=1}^{r-1}\sum_{k=c_{i}+1}^{c_{r}}\frac{\mu_{i1}}{k^{1+m}}
\end{align*}
This is a rational number. On the other hand, note that if $j\geq 2$, then\\
\begin{equation*}
\sum_{k=0}^{\infty}\frac{1}{(1+c_{i}+k)^{j+m}}=\zeta(j+m)-\sum_{k=1}^{c_{i}}\frac{1}{k^{j+m}}
\end{equation*}
Hence\\
\begin{align*}
   & \sum_{k=0}^{\infty}\sum_{i=1}^{r}\sum_{j=2}^{b_{i}}\frac{(-1)^{m}(m+j-1)!\mu_{ij}}{(j-1)!}\frac{1}{(1+c_{i}+k)^{j+m}} \\
= &(-1)^{m}\sum_{i=1}^{r}\sum_{j=2}^{b_{i}}\frac{(m+j-1)!\mu_{ij}}{(j-1)!}(\zeta(j+m)-\sum_{k=1}^{c_{i}}\frac{1}{k^{j+m}})\\
=&(-1)^{m}\sum_{i=1}^{r}\sum_{j=2}^{b_{i}}\frac{(m+j-1)!\mu_{ij}}{(j-1)!}\zeta(j+m,c_{i}+1)
\end{align*}
It turns out that\\
\begin{align*}
&\frac{\partial ^{m}}{\partial t^m}|_{t=0}A(t,a_{1},a_{2},...,a_{n})\\
=&(-1)^{m}m!\sum_{i=1}^{r-1}\sum_{k=c_{i}+1}^{c_{r}}\frac{\mu_{i1}}{k^{1+m}}+(-1)^{m}\sum_{i=1}^{r}\sum_{j=2}^{b_{i}}\frac{(m+j-1)!\mu_{ij}}{(j-1)!}\zeta(j+m,c_{i}+1)\\
=&(-1)^{m}m!\{\sum_{i=1}^{r-1}\sum_{k=c_{i}+1}^{c_{r}}\frac{\mu_{i1}}{k^{1+m}}+\sum_{i=1}^{r}\sum_{j=2}^{b_{i}}\binom{m+j-1}{m}\mu_{ij}\zeta(j+m,c_{i}+1)\}\\
=&(-1)^{m}m!\{\sum_{i=1}^{r-1}\mu_{i1}(H_{m+1}(c_{r})-H_{m+1}(c_{i}))+\sum_{i=1}^{r}\sum_{j\geq2}\binom{m+j-1}{m}\mu_{ij}\zeta(j+m,c_{i}+1)\}
\end{align*}
If $r=n$, then $b_{1}=...=b_{n}=1$, that is $\lambda_{i}=\mu_{i1}$. Case II is in fact included in Case III. On the other hand, no matter in which case, since\\
\begin{equation*}
A(t,a_{1},a_{2},...,a_{n})=\int_{(0,1)^n}\frac{\prod_{i=1}^{n}x_{i}^{t+a_{i}}}{1-\prod_{i=1}^{n}x_{i}}dx_{1}\ldots dx_{n}
\end{equation*}
then
\begin{equation*}
\frac{\partial ^{m}}{\partial t^m}|_{t=0}A(t,a_{1},a_{2},...,a_{n})=\int_{(0,1)^n}\frac{\log^{m}(\prod_{i=1}^{n}x_{i})\prod_{i=1}^{n}x_{i}^{a_{i}}}{1-\prod_{i=1}^{n}x_{i}}dx_{1}\ldots dx_{n}
\end{equation*}
Therefore as a consequence,\\
if $r=1$\\
\begin{equation*}
I_{m}(a_{1},a_{2},\ldots, a_{n})==\binom{m+n-1}{m}\zeta(n+m,c_{1}+1)
\end{equation*}
if $1<r\leq n$,
\begin{equation*}
I_{m}(a_{1},a_{2},\ldots, a_{n})=\sum_{i=1}^{r-1}\mu_{i1}(H_{m+1}(c_{r})-H_{m+1}(c_{i}))+\sum_{i=1}^{r}\sum_{j\geq2}\binom{m+j-1}{m}\mu_{ij}\zeta(j+m,c_{i}+1)
\end{equation*}
The details about convergence and interchanging the order of integration, summation and derivatives are omitted here, one can see\cite{had}.\\
\end{proof}

\begin{example}
Let\\
\begin{equation*}
  I_{m}(a_{1},a_{2},a_{3})=\frac{(-1)^m}{m!}\int_{(0,1)^3}\frac{\log^{m}(xyz)x^{a_{1}} y^{a_{2}} z^{a_{3}}}{1-xyz}dxdydz
\end{equation*}
where $a_{1},a_{2},a_{3}$ nonnegative integers,\\
\begin{itemize}
  \item If $a_{1}=a_{2}=a_{3}=a$, then\\
\end{itemize}
\begin{equation*}
  I_{m}(a_{1},a_{2},a_{3})=\binom{m+2}{m}\zeta(m+3,a+1)=\frac{(m+1)(m+2)}{2}(\zeta(m+3)-H_{m+3}(a))
\end{equation*}
\begin{itemize}
  \item If $a_{1}<a_{2}<a_{3}$, then\\
\end{itemize}
\begin{align*}
&I_{m}(a_{1},a_{2},a_{3})\\
=&\frac{1}{(a_{2}-a_{1})(a_{3}-a_{1})}(H_{m+1}(a_{3})-H_{m+1}(a_{1}))+\frac{1}{(a_{1}-a_{2})(a_{3}-a_{2})}(H_{m+1}(a_{3})-H_{m+1}(a_{2}))
\end{align*}
\begin{itemize}
  \item If $c_{1}=a_{1}=a_{2}<a_{3}=c_{2}$, then\\
\end{itemize}
\begin{equation*}
I_{m}(a_{1},a_{2},a_{3})=\mu_{11}(H_{m+1}(c_{2})-H_{m+1}(c_{1}))+(m+1)\mu_{12}\zeta(m+2,c_{1}+1)
\end{equation*}

\begin{itemize}
  \item If $c_{1}=a_{1}<a_{2}=a_{3}=c_{2}$, then\\
\end{itemize}
\begin{equation*}
I_{m}(a_{1},a_{2},a_{3})=\mu_{11}(H_{m+1}(c_{2})-H_{m+1}(c_{1}))+(m+1)\mu_{22}\zeta(m+2,c_{2}+1)
\end{equation*}
\end{example}

\begin{example}
As a special case of $I_{m}(a_{1},...,a_{n})$, let $n=1$, $a_{1}=a$, then\\
\begin{equation*}
I_{m}(a)=\frac{(-1)^m}{m!}\int_{0}^{1}\frac{\log^{m}(x) x^a}{1-x}dx
\end{equation*}
In fact this integral converges if $m\geq 1$. To see this, firstly consider\\
\begin{equation*}
  f_N (x)=x^{a+t}(1+x+...+x^{N-1})=\frac{x^{a+t}(1-x^{N})}{1-x}
\end{equation*}
where $N$ is an integer sufficiently large. Observe the integral\\
\begin{equation*}
  \int_{0}^{1}f_{N}(x)dx=\sum_{k=1}^{N}\frac{1}{a+t+k}
\end{equation*}
and taking $\frac{d^m}{dt^m}|_{t=0}$ on both sides, where $m\geq 1$, $m\in \mathbb{Z}$, we get\\
\begin{equation*}
 \int_{0}^{1}\frac{\log^{m}(x) x^{a}(1-x^{N})}{1-x}dx=\sum_{k=1}^{N}\frac{(-1)^m m!}{(a+k)^{m+1}}
\end{equation*}
Let $N\rightarrow \infty$, then $x^{N}\rightarrow 0$ for all $x\in(0,1)$. That is\\
\begin{equation*}
 \int_{0}^{1}\frac{\log^{m}(x) x^{a}}{1-x}dx=\sum_{k=1}^{\infty}\frac{(-1)^m m!}{(a+k)^{m+1}}
\end{equation*}
Therefore\\
\begin{equation}\label{hurwitz}
  I_{m}(a)=\frac{(-1)^m}{m!}\int_{0}^{1}\frac{\log^{m}(x) x^{a}}{1-x}dx=\sum_{k=1}^{\infty}\frac{1}{(a+k)^{m+1}}=\zeta(m+1,a+1)
\end{equation}
It's well defined if $m\geq 1,a\geq 0$, $a,m\in \mathbb{Z}$.\\
In fact, recall the integral representation of Hurwitz zeta function\\
\begin{equation*}
  \zeta(m+1,a+1)=\frac{1}{\Gamma(m+1)}\int_{0}^{\infty}\frac{t^m e^{-(a+1)t}}{1-e^{-t}}dt
\end{equation*}
for $\Re(m)>0, \Re(a)>-1$.\\
To substitute $t$ by $-\log(x)$, by simple computation we obtain\\
\begin{equation*}
  \zeta(m+1,a+1)=\frac{1}{\Gamma(m+1)}\int_{0}^{1}\frac{(-\log(x))^{m} x^{a}}{1-x}dx
\end{equation*}
It is exactly (\ref{hurwitz}) formally, but here $a,m\in \mathbb{C}$ and $\Re(m)>0, \Re(a)>-1$.\\
\end{example}

\begin{theorem}
Assume that $n\geq2$, and $m,a_{1},...,a_{n}$ be nonnegative integers, $\{c_{1}^{(b_{1})},...,c_{r}^{(b_{r})}\}$ be the multiset representation of $a_{1},...,a_{n}$ with $c_{1}<...<c_{r}$ and $b_{1}+...+b_{r}=n$, $b_{+}=\max\{b_{1},...,b_{r}\}$. According to Theorem \ref{Beu2}, it follows that\\
\begin{equation*}
  I_{m}(a_{1},...,a_{n})=\frac{p_{1}+p_{2}\zeta(m+2)+...+p_{n}\zeta(m+b_{+})}{q}
\end{equation*}
where $p_{1},...,p_{n},q\in\mathbb{Z}$ with $(p_{i},q)=1$ for all $i$, we have the following estimates of $q$.\\
If $r=1$, then\\
\begin{equation*}
  q|lcm(1,...,a_{n})^{n+m}
\end{equation*}
If $r>1$, then\\
\begin{equation*}
  q|(b_{+}-1)!\cdot lcm(1,...,c_{r})^{m+b_{+}} \prod_{1\leq s<t\leq r}(c_{t}-c_{s})^{n-1}
\end{equation*}
\end{theorem}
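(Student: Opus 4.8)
The plan is to start from the closed forms supplied by Theorem~\ref{Beu2} and to track the denominator of every rational quantity that occurs. First I would rewrite each Hurwitz value through $\zeta(j+m,c_i+1)=\zeta(j+m)-H_{j+m}(c_i)$, which is legitimate because the $c_i$ are nonnegative integers and $j+m\ge 2$. Substituting this into the $r>1$ formula regroups $I_m$ into the shape
\begin{equation*}
I_m(a_1,\dots,a_n)=\Big(\text{rational constant}\Big)+\sum_{j=2}^{b_+}\Big(\sum_{i:\,b_i\ge j}\binom{m+j-1}{m}\mu_{ij}\Big)\zeta(m+j),
\end{equation*}
so the coefficients $p_2,\dots,p_n$ are $\mathbb{Z}$-multiples of the $\mu_{ij}$, while $p_1/q$ is the rational constant built from the $\mu_{ij}$ together with the harmonic numbers $H_{m+1}(c_r),H_{m+1}(c_i),H_{j+m}(c_i)$; note that no $\zeta(m+1)$ survives because $\sum_i\mu_{i1}=0$ by Lemma~\ref{lemma2}. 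It then suffices to bound the denominator of each summand and take the least common multiple, after which reducing to lowest terms (the normalization $(p_i,q)=1$) can only shrink $q$.

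The heart of the argument is the denominator of $\mu_{ij}$. Expanding the derivative in Lemma~\ref{lemma2} by the multinomial Leibniz rule gives
\begin{equation*}
\mu_{ij}=\frac{(-1)^{j-1}}{(b_i-j)!}\sum_{\substack{d_\ell\ge 0\\ \sum_{\ell\ne i} d_\ell=b_i-j}}\frac{(b_i-j)!}{\prod_{\ell\ne i}d_\ell!}\prod_{\ell\ne i}\frac{(b_\ell+d_\ell-1)!}{(b_\ell-1)!}\,(c_\ell-c_i)^{-(b_\ell+d_\ell)} .
\end{equation*}
The multinomial coefficient and each product of consecutive integers $\tfrac{(b_\ell+d_\ell-1)!}{(b_\ell-1)!}$ are integers, so the inner sum is an integer combination of the quantities $\prod_{\ell\ne i}(c_\ell-c_i)^{-(b_\ell+d_\ell)}$; since the $c$'s are integers its denominator divides $\prod_{\ell\ne i}(c_\ell-c_i)^{\,b_\ell+b_i-j}$ (the extreme case concentrating all $b_i-j$ derivatives on one factor). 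Hence the denominator of $\mu_{ij}$ divides $(b_i-j)!\prod_{\ell\ne i}(c_\ell-c_i)^{\,b_\ell+b_i-j}$. Two estimates close this step: $(b_i-j)!\mid(b_+-1)!$ since $j\ge 1$ and $b_i\le b_+$; and the exponent of any single difference satisfies $b_\ell+b_i-j\le b_\ell+b_i-1\le n-1$, using $b_\ell+b_i\le\sum_s b_s=n$ valid because $r\ge 2$.

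Assembling the pieces yields the claim. Each factor $c_t-c_s$ ($s<t$) can appear only inside $\mu_{sj}$ and $\mu_{tj}$, with exponent at most $b_s+b_t-1\le n-1$, so every $\mu_{ij}$ has denominator dividing $(b_+-1)!\prod_{1\le s<t\le r}(c_t-c_s)^{\,n-1}$; therefore so does each coefficient of $\zeta(m+j)$. In the rational constant, a product $\mu_{ij}H_{j+m}(c_i)$ (and likewise the $\mu_{i1}H_{m+1}(\cdot)$ terms) has denominator dividing that of $\mu_{ij}$ times that of the harmonic number, and $H_{j+m}(c_i)$ has denominator dividing $\mathrm{lcm}(1,\dots,c_i)^{j+m}\mid\mathrm{lcm}(1,\dots,c_r)^{m+b_+}$ because $c_i\le c_r$ and $j\le b_+$. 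Taking the least common multiple over all summands gives $q\mid(b_+-1)!\cdot\mathrm{lcm}(1,\dots,c_r)^{m+b_+}\prod_{1\le s<t\le r}(c_t-c_s)^{n-1}$. The case $r=1$ is immediate, since $I_m=\binom{m+n-1}{m}\big(\zeta(m+n)-H_{m+n}(a_n)\big)$ with an integer binomial coefficient, whence $q\mid\mathrm{lcm}(1,\dots,a_n)^{m+n}$.

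\textbf{Main obstacle.} The delicate point is the uniform exponent bound $n-1$: one must verify, through the Leibniz expansion, both that no difference $c_t-c_s$ can occur to a power exceeding $b_s+b_t-1$, and that $b_s+b_t\le n$ forces this below $n-1$. The accompanying bookkeeping—showing that the factorial and $\mathrm{lcm}$ contributions of the various summands combine into the single stated product rather than compounding when one passes to the common denominator—is the part that requires care.
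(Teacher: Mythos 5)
Your proposal is correct and follows essentially the same route as the paper's proof: rewrite $\zeta(j+m,c_i+1)=\zeta(j+m)-H_{j+m}(c_i)$, bound $\delta(\mu_{ij})$ by $(b_i-j)!\prod_{\ell\neq i}(c_\ell-c_i)^{b_\ell+b_i-j}$ via the Leibniz/multinomial expansion of the derivative in Lemma~\ref{lemma2}, weaken this to $(b_+-1)!\prod_{1\leq s<t\leq r}(c_t-c_s)^{n-1}$ using $b_\ell+b_i\leq n$, bound the harmonic-number denominators by $lcm(1,\ldots,c_r)^{m+b_+}$, and clear denominators by a common multiple. The only differences are organizational (the paper introduces intermediate quantities $D_i$, $D$ and multiplies through in three explicit steps, while you take the lcm over summands directly), so there is nothing substantive to add.
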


Before showing the proof, we firstly recall some concepts and facts. Let $x\in \mathbb{Q}$ and $x\neq 0$, then there are always integers $p,q$ satisfying $x=p/q$ and $q>0$ with $(p,q)=1$. $q$ is called the reduced denominator of $x$, which is denoted by $\delta(x)$ in this paper. In fact, assume that $x\in \mathbb{Q}, a\in \mathbb{Z}$, both $a,x\neq 0$, if $ax\in \mathbb{Z}$ then $\delta(x)|a$. The lowest common multiple of $x_{1},...,x_{n}$ is denoted by $lcm(x_{1},...,x_{n})$. A very simple fact is that, if $a,b\in \mathbb{Q}$ and $a,b\neq 0$, then $\delta(a+b)|lcm(\delta(a),\delta(b))$. This is due to $lcm(\delta(a),\delta(b))\cdot (a+b)\in\mathbb{Z}$.\\
\begin{proof}
Firstly reformulating the expression of $I_{m}(a_{1},...,a_{n})$, there are two cases\\
\textbf{ Case I}, if $r=1$, that is $c_{1}=a_{1}=a_{2}=...=a_{n}$. Follows from the result of preceding theorem, we have\\
\begin{align*}
  I_{m}(a_{1},...,a_{n})&=\binom{m+n-1}{m}\zeta(n+m,c_{1}+1)\\
  &=\binom{m+n-1}{m}\zeta(n+m)-\binom{m+n-1}{m}H_{n+m}(c_{1})
\end{align*}
Since $\binom{m+n-1}{m}$ is always an integer, it's sufficient to estimate the denominator of $H_{n+m}(c_{1})$. And since\\
\begin{equation*}
 H_{n+m}(c_{1})=\sum_{k=1}^{c_{1}}\frac{1}{k^{n+m}}
\end{equation*}
the denominator of $H_{n+m}(c_{1})$ should be a divisor of $lcm(1,...,c_{1})^{n+m}$. Therefore if we represent $I_{m}(a_{1},...,a_{n})$ as $\frac{p_{1}+p_{2}\zeta(m+2)+...+p_{n}\zeta(m+n)}{q}$ under the condition of $c_{1}=a_{1}=a_{2}=...=a_{n}$, then\\
\begin{equation*}
  q|lcm(1,...,c_{1})^{n+m}
\end{equation*}

\textbf{ Case II}, if $1<r\leq n$, then it follows from the result of preceding theorem\\
\begin{equation*}
  I_{m}(a_{1},...,a_{n})=\sum_{i=1}^{r-1}\mu_{i1}(H_{m+1}(c_{r})-H_{m+1}(c_{i}))+\sum_{i=1}^{r}\sum_{j\geq2}\binom{m+j-1}{m}\mu_{ij}\zeta(j+m,c_{i}+1)
\end{equation*}
Reformulate $\zeta(j+m,c_{i}+1)$ as $\zeta(j+m)-H_{j+m}(c_{i})$, then we obtain\\
\begin{align}
  &I_{m}(a_{1},...,a_{n})\label{longest}\\
  =&\sum_{i=1}^{r-1}\mu_{i1}(H_{m+1}(c_{r})-H_{m+1}(c_{i}))-\sum_{i=1}^{r}\sum_{j\geq2}\binom{m+j-1}{m}\mu_{ij}H_{j+m}(c_{i})\\
  &+\sum_{i=1}^{r}\sum_{j\geq2}\binom{m+j-1}{m}\mu_{ij}\zeta(j+m)\\
=&\sum_{i=1}^{r-1}\frac{N_{1,i}}{\delta(\mu_{i1})\delta(H_{m+1}(c_{r})-H_{m+1}(c_{i}))}-\sum_{i=1}^{r}\sum_{j\geq2}\frac{N_{2,ij}}{\delta(\mu_{ij})\delta(H_{j+m}(c_{i}))}\\
&+\sum_{i=1}^{r}\sum_{j\geq2}\frac{N_{3,ij}\zeta(j+m)}{\delta(\mu_{ij})}\\
\end{align}
where $N_{1,i},N_{2,ij}, N_{3,ij}\in \mathbb{Z}$. In following we divide the proof in three steps: Firstly, to prove that there are integers $D_{i}$ such that both $\delta(\mu_{i1})$ and $\delta(\mu_{ij})$ are divisors of $D_{i}$. Secondly, to prove that there is an integer $D$ such that both $\delta((H_{m+1}(c_{r})-H_{m+1}(c_{i})))$ and $\delta(H_{j+m}(c_{i}))$ are divisors of of $D$. Finally, by showing that $q|D \cdot lcm(D_{1},...,D_{r})$ to find the estimate that we needed.\\

\textbf{STEP 1}\\
Let\\
\begin{equation*}
  \prod_{i=1}^{r}\frac{1}{(c_{i}+x)^{b_{i}}}=\sum_{i=1}^{r}\sum_{j=1}^{b_{i}}\frac{\mu_{ij}}{(c_{i}+x)^{j}}
\end{equation*}
By the Lemma\ref{lemma2}, we have the expression of $\mu_{ij}$ as follow\\
\begin{equation*}
 \mu_{ij}=\frac{(-1)^{j-1}}{(b_{i}-j)!}\frac{\partial^{b_{i}-j}}{\partial z^{b_{i}-j}}|_{z=c_{i}}\prod_{\ell=1,\ell\neq i}^{r}\frac{1}{(c_{\ell}-z)^{b_{\ell}}}
\end{equation*}
For simplicity, we may let\\
\begin{equation*}
  A_{\ell}=\begin{cases}
c_{\ell} \text{, if } \ell<i\\
c_{\ell+1}\text{, if } \ell\geq i
\end{cases}
\end{equation*}
\begin{equation*}
  B_{\ell}=\begin{cases}
b_{\ell} \text{, if } \ell<i\\
b_{\ell+1}\text{, if } \ell\geq i
\end{cases}
\end{equation*}

then\\
\begin{equation*}
\prod_{\ell=1,\ell\neq i}^{r}\frac{1}{(c_{\ell}-z)^{b_{\ell}}}=\prod_{\ell=1}^{r-1}\frac{1}{(A_{\ell}-z)^{B_{\ell}}}
\end{equation*}

Let $M=b_{i}-j$ and $0\leq M_{1},...,M_{r-1}\leq M$ be integers. If we denote
\begin{equation*}
  F(z)=\frac{\partial^{M}}{\partial z^{M}}\prod_{\ell=1,\ell\neq i}^{r-1}\frac{1}{(c_{\ell}-z)^{b_{\ell}}}
\end{equation*}

\begin{align*}
F(z)&=\sum_{M_{1}+...+M_{r-1}=M}\binom{M}{M_{1},...,M_{r-1}}\prod_{\ell=1}^{r-1}(\frac{1}{(A_{\ell}-z)^{B_{\ell}}})^{(M_{\ell})}\\
&=\sum_{M_{1}+...+M_{r-1}=M}\binom{M}{M_{1},...,M_{r-1}}\prod_{\ell=1}^{r-1}\frac{(B_{\ell}+M_{\ell}-1)!}{(B_{\ell}-1)!}\frac{1}{(A_{\ell}-z)^{B_{\ell}+M_{\ell}}}\\
\end{align*}
Note that $\binom{M}{M_{1},...,M_{r-1}}\in \mathbb{Z}$, $\frac{(B_{\ell}+M_{\ell}-1)!}{(B_{\ell}-1)!}\in \mathbb{Z}$ for all $1\leq\ell\leq r-1$.
then\\
\begin{equation*}
  F(z)\cdot\prod_{\ell=1}^{r-1}(A_{\ell}-z)^{B_{\ell}+M}
\end{equation*}
should be a polynomial of $z$ with integer coefficients. This implies that the denominator of $F(c_{i})$ is a divisor of $\prod_{\ell=1}^{r-1}(A_{\ell}-c_{i})^{B_{\ell}+M}$. In other words\\
\begin{equation*}
  \delta(F(c_{i}))|\prod_{\ell=1,\ell\neq i}^{r}(c_{\ell}-c_{i})^{b_{\ell}+b_{i}-j}
\end{equation*}
Because of $\mu_{ij}=\frac{(-1)^{j-1}}{(b_{i}-j)!}F(c_{i})$, therefore
\begin{equation*}
  \delta(\mu_{ij})|(b_{i}-j)!\prod_{\ell=1, \ell\neq i}^{r}(c_{\ell}-c_{i})^{b_{\ell}+b_{i}-j}
\end{equation*}
As a special case,\\
\begin{equation*}
  \delta(\mu_{i1})|(b_{i}-1)!\prod_{\ell=1, \ell\neq i}^{r}(c_{\ell}-c_{i})^{b_{\ell}+b_{i}-1}
\end{equation*}
It's easy to check for $j\geq 1$\\
\begin{align*}
&(b_{i}-j)!|(b_{i}-1)!\\
&(c_{\ell}-c_{i})^{b_{\ell}+b_{i}-j}|(c_{\ell}-c_{i})^{b_{\ell}+b_{i}-1}
\end{align*}
This implies that\\
\begin{equation*}
\delta(\mu_{ij})|(b_{i}-1)!\prod_{\ell=1, \ell\neq i}^{r}(c_{\ell}-c_{i})^{b_{\ell}+b_{i}-1}
\end{equation*}
Now denote $(b_{i}-1)!\prod_{\ell=1, \ell\neq i}^{r}(c_{\ell}-c_{i})^{b_{\ell}+b_{i}-1}$ by $D_{i}$, thus $\delta(\mu_{ij})|D_{i}$ for all $j$.\\

\textbf{STEP 2}\\
By the expression of $H_{m+1}(x)$ it's obvious to see that,\\
\begin{equation*}
\delta(H_{m+1}(c_{r})-H_{m+1}(c_{i}))|lcm(c_{i}+1,...,c_{r})^{m+1}
\end{equation*}
On the one hand, since $c_{1}<...< c_{r}$, this gives following is true for all $i\geq 1$\\
\begin{equation*}
lcm(c_{i}+1,...,c_{r})^{m+1}|lcm(c_{1}+1,...,c_{r})^{m+1}
\end{equation*}
Hence\\
\begin{equation*}
\delta(H_{m+1}(c_{r})-H_{m+1}(c_{i}))|lcm(c_{1}+1,...,c_{r})^{m+1}
\end{equation*}
On the other hand,\\
\begin{equation*}
\delta(H_{m+j}(c_{i}))|lcm(1,...,c_{i})^{m+j}
\end{equation*}
and since $c_{1}<...< c_{r}$, this gives for all $i\geq 1$
\begin{equation*}
  lcm(1,...,c_{i})^{m+j}|lcm(1,...,c_{r})^{m+j}
\end{equation*}
Hence\\
\begin{equation*}
\delta(H_{m+j}(c_{i}))|lcm(1,...,c_{r})^{m+j}
\end{equation*}
Now let $D=lcm(1,...,c_{r})^{m+b_{+}}$, where $b_{+}=\max\{b_{1},...,b_{r}\}$, we have both $\delta(H_{m+1}(c_{r})-H_{m+1}(c_{i}))$ and $\delta(H_{m+j}(c_{i}))$ are divisors of $D$. \\

\textbf{STEP 3}\\
Observe (\ref{longest}) and rewrite it as\\
\begin{align*}
  &I_{m}(a_{1},...,a_{n})\\
  =&\sum_{i=1}^{r-1}\frac{N_{1,i}}{\delta(\mu_{i1})\delta(H_{m+1}(c_{r})-H_{m+1}(c_{i}))}-\sum_{i=1}^{r}\sum_{j\geq2}\frac{N_{2,ij}}{\delta(\mu_{ij})\delta(H_{j+m}(c_{i}))}\\
  &+\sum_{i=1}^{r}\sum_{j\geq2}\frac{N_{3,ij}\zeta(j+m)}{\delta(\mu_{ij})}
\end{align*}
By the result of Step 2, now multiplying $D=lcm(1,...,c_{r})^{m+b_{+}}$ on both sides, we have\\
\begin{equation*}
  D\cdot I_{m}(a_{1},...,a_{n})=\sum_{i=1}^{r-1}\frac{N'_{1,i}}{\delta(\mu_{i1})}-\sum_{i=1}^{r}\sum_{j\geq2}\frac{N'_{2,ij}}{\delta(\mu_{ij})}+\sum_{i=1}^{r}\sum_{j\geq2}\frac{N'_{3,ij}\zeta(j+m)}{\delta(\mu_{ij})}
\end{equation*}
Because $\delta(\mu_{ij})|D_{i}$ for all $i,j$, by multiplying $lcm(D_{1},...,D_{r})$ on both sides, we have\\
\begin{equation*}
D \cdot lcm(D_{1},...,D_{r-1})I_{m}(a_{1},...,a_{n})=N''_{1}+N''_{2}\zeta(m+2)+...+N''_{b_{+}}\zeta(m+b_{+})
\end{equation*}
That is $q|D\cdot lcm(D_{1},...,D_{r})$.\\
Finally, let\\
\begin{equation*}
  D_{0}=(b_{+}-1)!\prod_{1\leq s<t\leq r}(c_{t}-c_{s})^{n-1}
\end{equation*}
then $lcm(D_{1},...,D_{r})|D_{0}$. It's easy to see\\
\begin{equation*}
(b_{i}-1)!|(b_{+}-1)!
\end{equation*}
And by $b_{1}+...+b_{r}=n$ we have\\
\begin{equation*}
  \prod_{\ell=1,\ell\neq i}^{r}(c_{\ell}-c_{i})^{b_{\ell}+b_{i}-1}|\prod_{1\leq s<t\leq r}(c_{t}-c_{s})^{n-1}
\end{equation*}
Now we can give the estimate of $q$ as\\
\begin{equation*}
  q|(b_{+}-1)!\cdot lcm(1,...,c_{r})^{m+b_{+}} \prod_{1\leq s<t\leq r}(c_{t}-c_{s})^{n-1}
\end{equation*}
That is what we need.
\end{proof}

\section{Estimates of the Rational Approximation of \texorpdfstring{$\zeta(5)$}{zeta(5)} }
In order to prove $\zeta(3)$ is irrational, the key is to find a parametric representation of $\zeta(3)$ and to construct an effective rational approximation. This rational approximation is related to the Legendre-type polynomial. In the last section we have discussed the generalized Beukers integral. On the one hand, it provides a parametric representation of $\zeta(2n+1)$, on the other hand, such generalization makes it possible to construct rational approximation of $\zeta(2n+1)$. As a special case, by using the Legendre-type polynomials to find a approximation of $\zeta(5)$ is the most obvious way trying to prove the irrationality of $\zeta(5)$. But unfortunately, this approximation is not as effective as the case of $\zeta(3)$. In this section, we prove this result. Before showing the proof, we firstly give two lemmas. \textbf{Through out this section, $1-xy$ is denoted by $f$, $1-s$ is denoted by $\overline{s}$, $1-r$ is denoted by $\overline{r}$ etc.}\\
More specifically, by theorem \ref{attempt} we can construct a integral $I(a,b)$ for nonnegative integer $a,b$, such that
\begin{equation*}
I(a,b)=\begin{cases}
q_{0}\zeta(5)+q_{1}\text{, if } a=b\\
q_{2}\text{, if } a\neq b
\end{cases}
\end{equation*}
where $q_{0},q_{1},q_{2}\in \mathbb{Q}$. It turns out that if we let $Q_{n}(x),Q_{n}(y)$ be polynomials of $x$ and $y$ respectively with integer coefficients and degree $n$, then
\begin{equation*}
  -\int_{(0,1)^2}\frac{\log^{3}(xy)Q_{n}(x)Q_{n}(y)}{1-xy}dxdy=\alpha_{n}\zeta(5)+\beta_{n}
\end{equation*}
where $\alpha_{n},\beta_{n}\in \mathbb{Q}$. That is, we found a parametric representation of $\zeta(5)$. By letting $Q_{n}$ be the Legendre-type polynomial, which denoted by $P_{n}$ here, namely, $P_{n}(x):=\frac{1}{n!}\frac{d^n}{dx^n}(x(1-x))^n$, we are able to construct a rational approximation of $\zeta(5)$.\\
Let $J_{3}(n):=-\int_{(0,1)^2}\frac{\log^{3}(xy)P_{n}(x)P_{n}(y)}{1-xy}dxdy$, then according to theorem \ref{attempt}, we have $J_{3}(n)=\frac{A_{n}\zeta(5)+B_{n}}{d_{n}^5}$, where $A_{n},B_{n}\in \mathbb{Z}$, $d_{n}=lcm(1,...,n)$. In following we prove that $\frac{6}{(n+1)^4} \leq J_{3}(n)\leq \frac{6\pi^2}{(n+\frac{1}{2})^2}$. Due to $d_{n}^5 \frac{6}{(n+1)^4}>1$ for all sufficiently large $n$, we are not able to show the irrationality of $\zeta(5)$.\\
\begin{lemma}\label{inequ}
For any integer $m\geq2$, following inequality is true for all $x\in (0,+\infty)$. Moreover, the equations hold if and only if $x=1$.
\begin{equation*}
m(1-\frac{1}{\sqrt[m]{x}})\leq\log(x)\leq m(\sqrt[m]{x}-1)
\end{equation*}
\end{lemma}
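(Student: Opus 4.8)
The plan is to reduce the two-sided inequality to a single elementary fact about the logarithm and then exploit symmetry. The key observation is that the substitution $x \mapsto 1/x$ interchanges the two inequalities: replacing $x$ by $1/x$ turns $m(\sqrt[m]{x}-1)$ into $m(\tfrac{1}{\sqrt[m]{x}}-1) = -m(1-\tfrac{1}{\sqrt[m]{x}})$ and turns $\log(x)$ into $-\log(x)$. Hence the left inequality $m(1-\tfrac{1}{\sqrt[m]{x}}) \le \log(x)$ for $x$ is equivalent to the right inequality $\log(x) \le m(\sqrt[m]{x}-1)$ evaluated at $1/x$. So it suffices to prove just one of them, say the upper bound, for all $x \in (0,\infty)$, and the lower bound follows automatically.

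To prove the upper bound $\log(x) \le m(\sqrt[m]{x}-1)$, I would substitute $u = \sqrt[m]{x}$, so that $u \in (0,\infty)$ and the inequality becomes
\begin{equation*}
m\log(u) \le m(u-1), \qquad \text{i.e.} \qquad \log(u) \le u-1.
\end{equation*}
This is the standard inequality $\log(u) \le u - 1$, valid for all $u > 0$, with equality if and only if $u = 1$. One proves it directly by setting $\phi(u) = u - 1 - \log(u)$ and noting $\phi'(u) = 1 - 1/u$, which is negative on $(0,1)$ and positive on $(1,\infty)$, so $\phi$ attains its unique minimum $\phi(1) = 0$ at $u=1$; thus $\phi(u) \ge 0$ with equality precisely at $u=1$.

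Finally I would transfer the equality case back: since $u = \sqrt[m]{x} = 1$ iff $x = 1$, the upper inequality is an equality exactly when $x=1$, and by the symmetry argument the lower inequality is an equality exactly when $1/x = 1$, i.e. again $x = 1$. This establishes both inequalities together with the stated equality condition for every integer $m \ge 2$ (indeed for every real $m > 0$). I do not anticipate any real obstacle here; the only point requiring a little care is phrasing the $x \mapsto 1/x$ symmetry cleanly so that the single convexity fact $\log u \le u-1$ does all the work rather than differentiating the original expression in $x$ twice.
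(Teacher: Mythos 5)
Your proof is correct, but it is organized quite differently from the paper's. The paper proves the two inequalities separately, each by a direct derivative computation in $x$: it sets $g(x)=\log(x)-m(\sqrt[m]{x}-1)$ for the upper bound and $g(x)=\log(x)-m(1-x^{-1/m})$ for the lower bound, and in each case reads off the sign of $g'$ on $(0,1)$ and $(1,\infty)$ to conclude that $x=1$ is the unique extremum where $g$ vanishes. You instead reduce both statements to the single classical fact $\log(u)\le u-1$: the substitution $u=\sqrt[m]{x}$ converts the upper bound into it, and the involution $x\mapsto 1/x$ carries the upper bound (together with its equality case) into the lower bound, so only one calculus computation is ever performed. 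The analytic core is the same in both treatments --- your $\phi'(u)=1-1/u$ is exactly the paper's $g'(x)=(1-x^{1/m})/x$ after the change of variable --- but your arrangement halves the work, isolates the one genuine ingredient cleanly, and shows in passing that the lemma holds for every real $m>0$, not only integers $m\ge 2$ (which is all the paper's application in Part I and II of the $J_3(n)$ estimate actually uses, since there $m\to\infty$). The paper's version, by contrast, needs no substitution or symmetry bookkeeping and is entirely self-contained. One cosmetic remark: the paper twice asserts ``Obviously $g(1)=1$,'' which is a typo for $g(1)=0$; your write-up states the correct value $\phi(1)=0$.
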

\begin{proof}
The proof is divided into two parts.\\
\textbf{I}.\\
Define $g(x):=\log(x)-m(\sqrt[m]{x}-1)$. Obviously $g(1)=1$ and\\
\begin{equation*}
  g'(x)=\frac{1}{x}-\frac{x^{\frac{1}{m}}}{x}=\frac{1-x^{\frac{1}{m}}}{x}
\end{equation*}
If $x\in (0,1)$, then $g'(x)>0$. If $x\in(1,\infty)$ then $g'(x)<0$. Therefore $g(x)$ is strictly monotonically increasing from negative number to $0$ on $(0,1)$, strictly monotonically decreasing from $0$ to negative number on $(1,+\infty)$. This shows $\log(x)\leq m(\sqrt[m]{x}-1)$. The two sides are equal if and only if $x=1$.\\
\textbf{II}.\\
Likewise we define $g(x):=\log(x)-m(1-x^{-\frac{1}{m}})$. Observe that $g(1)=1$ and\\
\begin{equation*}
  g'(x)=\frac{1}{x}-\frac{1}{x^{1+\frac{1}{m}}}=\frac{x^{\frac{1}{m}}-1}{x^{1+\frac{1}{m}}}
\end{equation*}
If $x\in (0,1)$, then $g'(x)<0$. If $x\in(1,\infty)$ then $g'(x)>0$. Therefore $g(x)$ is strictly monotonically decreasing from positive number to $0$ on $(0,1)$, strictly monotonically increasing from $0$ to positive number on $(1,+\infty)$. This shows $m(1-\frac{1}{\sqrt[m]{x}})\leq\log(x)$. The two sides are equal if and only if $x=1$.\\
\end{proof}

\begin{lemma}\label{cano}(Canonical transform)\\
Define \\
\begin{equation*}
L(a,b;n+1):=\int_{0}^{1}\frac{s^a \overline{s}^b}{(1-fs)^{n+1}}ds
\end{equation*}
then the equality is valid\\
\begin{equation*}
L(a,b;n+1)=(1-f)^{b-n} L(b,a; a+b+1-n)
\end{equation*}
\end{lemma}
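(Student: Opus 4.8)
The plan is to establish the identity by a single fractional-linear change of variables that interchanges the roles of the two exponents. The natural candidate is the map
\[
t=\frac{1-s}{1-fs},
\]
which is in fact an involution, sends $[0,1]$ bijectively onto itself, and swaps the endpoints $s=0\leftrightarrow t=1$ and $s=1\leftrightarrow t=0$. Since $f=1-xy\in(0,1)$ for $x,y\in(0,1)$, we have $1-fs\geq 1-f>0$ on $[0,1]$, so the substitution is a genuine diffeomorphism with no singularity inside the range of integration and the manipulation is legitimate regardless of the sign of the exponent $a+b+1-n$.

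First I would invert the substitution and record the algebraic identities that make the argument collapse. Solving for $s$ gives $s=\frac{1-t}{1-tf}$, and a direct computation then yields
\[
1-s=\frac{t(1-f)}{1-tf},\qquad 1-fs=\frac{1-f}{1-tf},\qquad ds=-\frac{1-f}{(1-tf)^2}\,dt.
\]
The middle identity is the crucial one: it is what turns the denominator $(1-fs)^{n+1}$ into a clean power of $1-f$ times a power of $1-tf$, and hence what ultimately produces the prefactor $(1-f)^{b-n}$.

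Next I would substitute these into $L(a,b;n+1)=\int_0^1 \frac{s^a(1-s)^b}{(1-fs)^{n+1}}\,ds$ and track the exponents of the four building blocks $t$, $1-t$, $1-f$, and $1-tf$. The reversal of the limits (from $\int_{s=0}^{s=1}$ to $\int_{t=1}^{t=0}$) cancels exactly the minus sign in $ds$, so no stray sign survives. Collecting the powers of $1-f$ coming from $(1-s)^b$, from the denominator, and from $ds$ gives $(1-f)^{b-(n+1)+1}=(1-f)^{b-n}$; collecting the powers of $1-tf$ coming from $s^a$, from $(1-s)^b$, from the denominator, and from $ds$ gives $(1-tf)^{-a-b+(n+1)-2}=(1-tf)^{-(a+b+1-n)}$; meanwhile $s^a$ contributes $(1-t)^a$ and $(1-s)^b$ contributes $t^b$.

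The integral that remains is then exactly
\[
(1-f)^{b-n}\int_0^1 \frac{t^b(1-t)^a}{(1-ft)^{a+b+1-n}}\,dt=(1-f)^{b-n}\,L(b,a;a+b+1-n),
\]
by the definition of $L$, which is the assertion. The only delicate point is the exponent bookkeeping for $1-tf$, which draws contributions from all four factors and must be checked to sum to $-(a+b+1-n)$; this is the single place where an arithmetic slip would be easy, so I would display that tally explicitly rather than leave it to the reader.
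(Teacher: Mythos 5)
Your proof is correct and is essentially identical to the paper's: both use the involutive substitution $s=\frac{1-t}{1-ft}$ (the paper writes $r$ for your $t$), record the same three identities for $1-s$, $1-fs$, and $ds$, and collect exponents of $1-f$ and $1-ft$ to arrive at $(1-f)^{b-n}L(b,a;a+b+1-n)$. Your explicit tally of the exponent bookkeeping is a fine addition, but there is no substantive difference in method.
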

\begin{proof}
Substitute $s$ by $\frac{1-r}{1-fr}$, then $1-s=\frac{r(1-f)}{1-fr}$, $1-fs=\frac{1-f}{1-fr}$ and $ds=-\frac{1-f}{(1-fr)^2}dr$. If $s=0$, then $r=1$, and if $s=1$, then $r=0$. Then\\
\begin{align*}
  \int_{0}^{1}\frac{s^a \overline{s}^b}{(1-fs)^{n+1}}ds & = -\int_{1}^{0}(\frac{1-r}{1-fr})^a (\frac{r(1-f)}{1-fr})^b (\frac{1-f}{1-fr})^{-n-1} \frac{1-f}{(1-fr)^2}dr\\
   & =(1-f)^{b-n}\int_{0}^{1}\frac{r^b \overline{r}^a}{(1-fr)^{a+b+1-n}}dr
\end{align*}
This is what we need. For convenience, this transform is called the canonical transform.
\end{proof}

\begin{lemma}
Assume that
\begin{align*}
  &J_{3}(n):=-\int_{(0,1)^2}\frac{\log^{3}(xy)P_{n}(x)P_{n}(y)}{1-xy}dxdy\\
  &R_{2}(n)=\int_{(0,1)^4}\frac{x^n \overline{x}^n y^n \overline{y}^n s^n \overline{u}^n }{(1-fs)^{n+1}}\frac{\log(\frac{s\overline{u}}{u\overline{s}})}{s-u}dxdydsdu
\end{align*}
then the equality $J_{3}(n)=6R_{2}(n)$ is valid for all $n\in\mathbb{Z}^{+}$.\\
\end{lemma}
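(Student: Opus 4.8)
The key observation is that the asserted equality is largely a matter of normalization: since the definition of the generalized Beukers integral carries the factor $\frac{(-1)^m}{m!}$, proving $J_3(n)=6R_2(n)$ is the same as proving
\[
R_2(n)=\frac{(-1)^3}{3!}\int_{(0,1)^2}\frac{\log^3(xy)P_n(x)P_n(y)}{1-xy}\,dx\,dy,
\]
i.e. that $R_2(n)$ is a four-fold integral representation of the normalized Beukers integral $I_3$ weighted by $P_n(x)P_n(y)$ rather than by $x^ay^b$. Thus the constant $6=3!$ is not the content of the lemma; the content is the passage from the two-dimensional integral to the four-dimensional one. I would reach it with three tools: the elementary integral representation of the logarithm, repeated integration by parts against the Rodrigues form of $P_n$, and the canonical transform of Lemma \ref{cano}.

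The starting identity is
\[
\frac{-\log(xy)}{1-xy}=\int_0^1\frac{ds}{1-fs},\qquad f=1-xy,
\]
which holds because $\int_0^1\frac{ds}{1-fs}=-\frac1f\log(1-f)$ and $1-f=xy$; equivalently $-\log(xy)=\int_0^1\frac{f\,ds}{1-fs}$. Writing $-\log^3(xy)$ as a product of three such integrals and dividing by $1-xy$ gives
\[
\frac{-\log^3(xy)}{6(1-xy)}=\int_{0<s_1<s_2<s_3<1}\frac{f^2\,ds_1\,ds_2\,ds_3}{(1-fs_1)(1-fs_2)(1-fs_3)},
\]
the factor $\tfrac16=\tfrac1{3!}$ being absorbed into the ordering of the three variables. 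Carrying out exactly one of the three integrations then produces a logarithm and collapses the representation to two auxiliary variables $s,u$; this is the step I expect to generate a divided difference, and after the change of variables of the canonical transform it should take the symmetric form recorded by
\[
\frac{\log s-\log u}{s-u}+\frac{\log\overline s-\log\overline u}{\,\overline s-\overline u\,}=\frac{1}{s-u}\log\frac{s\overline u}{u\overline s},
\]
which is exactly the kernel appearing in $R_2(n)$.

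To remove the Legendre factors I would use the Rodrigues form $P_n(x)=\frac1{n!}\frac{d^n}{dx^n}(x\overline x)^n$ and integrate by parts $n$ times in $x$ and $n$ times in $y$. All boundary terms vanish because $(x\overline x)^n$ together with its first $n-1$ derivatives vanishes at $0$ and $1$, so
\[
\int_0^1 P_n(x)\,g(x)\,dx=\frac{(-1)^n}{n!}\int_0^1 (x\overline x)^n\,g^{(n)}(x)\,dx .
\]
Since $1-fs=1-s+sxy$ is affine in $x$, one has $\frac{d^n}{dx^n}\frac1{1-fs}=\frac{n!\,(sy)^n}{(1-fs)^{n+1}}$, so each round of integration by parts extracts a factor $(x\overline x)^n$, raises the relevant denominator to the power $n+1$, and pulls out the $s^n$ appearing in $R_2(n)$; doing the same in $y$ yields the symmetric weight $(x\overline x\,y\overline y)^n$.

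The final and hardest step is to match the output of these reductions with the precise asymmetric kernel $\frac{s^n\overline u^n}{(1-fs)^{n+1}}\frac{1}{s-u}\log\frac{s\overline u}{u\overline s}$: only the single denominator $(1-fs)^{n+1}$ survives in $R_2(n)$, while the second variable enters through $\overline u^n$ and through the logarithm alone. I would reconcile this by applying the canonical transform of Lemma \ref{cano} to the remaining auxiliary variable, whose substitution $s\mapsto\frac{1-r}{1-fr}$ sends $1-fs$ to $\frac{1-f}{1-fr}$, sends $s$ to $\frac{\overline r}{1-fr}$ and $\overline s$ to $\frac{r(1-f)}{1-fr}$; applied to this variable it simultaneously converts a first-power denominator into the factor $\overline u^{\,n}$ and rewrites the argument of the logarithm in the form $\frac{s\overline u}{u\overline s}$. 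The genuine difficulty is purely the algebraic bookkeeping: tracking which of the three ordered integrations to perform explicitly, the exponents and the $(1-f)^{b-n}=(xy)^{b-n}$ factors introduced by the canonical transform (these must cancel the $(xy)$-part of $(x\overline x\,y\overline y)^n$), and the signs attached to $\frac1{s-u}$. The analytic points—Fubini for interchanging the $(x,y)$ and $(s,u)$ integrations and the legitimacy of differentiating under the integral sign—are routine and may be quoted as in \cite{had}.
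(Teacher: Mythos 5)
Your toolkit is the right one --- the paper's own proof uses exactly the same three ingredients (the representation $-\log(1-f)/f=\int_0^1 dz/(1-fz)$, Rodrigues-type integration by parts, and the substitution of Lemma \ref{cano}) --- but the middle of your plan contains a genuine gap. You propose to pass to the ordered region $\{s_1<s_2<s_3\}$, extracting the factor $3!=6$, and then to ``carry out exactly one of the three integrations'' to produce the logarithmic kernel. That integration gives
\[
\int_{s_1}^{s_3}\frac{ds_2}{1-fs_2}=\frac{1}{f}\,\log\frac{1-fs_1}{1-fs_3},
\]
a logarithm whose argument still contains $f=1-xy$. This is not the $f$-free kernel $\frac{1}{s-u}\log\frac{s\overline{u}}{u\overline{s}}$ appearing in $R_2(n)$, and the canonical transform cannot repair it: the substitution sends $1-fs_i$ to $\frac{1-f}{1-fr_i}$, so the ratio $\frac{1-fs_1}{1-fs_3}$ becomes $\frac{1-fr_3}{1-fr_1}$ and remains $f$-dependent. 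Moreover, whether you integrate first or not, the $n$-fold integration by parts in $x$ is blocked: the clean formula $\frac{d^n}{dx^n}\frac{1}{1-fs}=\frac{(-1)^n n!(sy)^n}{(1-fs)^{n+1}}$ requires the $x$-dependence to sit in a \emph{single} affine denominator, whereas your integrand carries either all three factors $\prod_i(1-fs_i)^{-1}$ (giving an unmanageable Leibniz sum) or the $f$-dependent logarithm above.

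The missing idea is, ironically, the paper's central technique: partial fraction decomposition, applied twice to the integrand rather than a decomposition of the domain. The paper first decomposes $\frac{f^2}{(1-fz_1)(1-fz_2)(1-fz_3)}$ into three terms each containing a single factor $\frac{1}{1-fz_i}$; these are equal by the symmetry of the \emph{unordered} cube, giving a factor $3$. Then comes the IBP in $x$, then the substitution $w_i=\frac{1-z_i}{1-fz_i}$, then a second partial fraction decomposition isolating $\frac{1}{1-fw_3}$ (two equal terms, factor $2$ --- so the $6$ arises as $3\cdot 2$, not as your $3!$), then the IBP in $y$. Only at the very last step is the middle variable $t$ integrated, against the $f$-free rational function $\frac{1}{(s-t)(t-u)}$, and it is this integration that creates the logarithm of $R_2(n)$ (your divided-difference identity is indeed how that principal-value integral evaluates). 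Note finally that your two devices are incompatible: once the domain is ordered, the permutation symmetry that makes the partial-fraction pieces equal is gone, so you cannot both take the $6$ from ordering and still collapse the decompositions to a single term. The repair is to drop the ordering and follow the decomposition-of-the-integrand route.
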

\begin{proof}
Recall that $f:=1-xy$, since $-\frac{\log(1-f)}{f}=\int_{0}^{1}\frac{1}{1-fz}dz$, we can rewrite $J_{3}(n)$ as following,\\
\begin{align*}
  J_{3}(n)&=-\int_{(0,1)^2}\frac{\log^{3}(xy)P_{n}(x)P_{n}(y)}{1-xy}dxdy\\
   & =-\int_{(0,1)^2}\frac{\log^{3}(1-f)}{f^3}f^2 P_{n}(x)P_{n}(y)dxdy\\
   &=\int_{(0,1)^5}\frac{f^{2} P_{n}(x)P_{n}(y)}{(1-fz_{1})(1-fz_{2})(1-fz_{3})}dxdydz_{1}dz_{2}dz_{3}\\
\end{align*}
By the partial fraction decomposition\\
\begin{align*}
  &\frac{f^2}{(1-fz_{1})(1-fz_{2})(1-fz_{3})}\\
  =&\frac{1}{(z_{2}-z_{i})(z_{3}-z_{1})}\frac{1}{1-fz_{1}}+\frac{1}{(z_{1}-z_{2})(z_{3}-z_{2})}\frac{1}{1-fz_{2}}+\frac{1}{(z_{1}-z_{3})(z_{2}-z_{3})}\frac{1}{1-fz_{3}}
\end{align*}
we obtain\\
\begin{equation*}
  J_{3}(n)=Q_{1}(n)+Q_{2}(n)+Q_{3}(n)
\end{equation*}
where\\
\begin{align*}
  Q_{1}(n) & =\int_{(0,1)^5}\frac{P_{n}(x)P_{n}(y)}{(1-fz_{1})(z_{2}-z_{1})(z_{3}-z_{1})}dxdydz_{1}dz_{2}dz_{3} \\
  Q_{2}(n) & =\int_{(0,1)^5}\frac{P_{n}(x)P_{n}(y)}{(1-fz_{2})(z_{1}-z_{2})(z_{3}-z_{2})}dxdydz_{1}dz_{2}dz_{3} \\
  Q_{3}(n) & =\int_{(0,1)^5}\frac{P_{n}(x)P_{n}(y)}{(1-fz_{3})(z_{1}-z_{3})(z_{2}-z_{3})}dxdydz_{1}dz_{2}dz_{3} \\
\end{align*}
It's easy to see that $Q_{1}(n)=Q_{2}(n)=Q_{3}(n)$, namely $J_{3}(n)=3Q_{1}(n)$. Hence it's sufficient to deal with $Q_{1}(n)$. For $Q_{1}(n)$, after taking n-fold partial integration with respect to $x$, we have\\
\begin{equation*}
 Q_{1}(n)=\int_{(0,1)^5}\frac{(xyz_{1})^n (1-x)^n P_{n}(y) }{(1-fz_{1})^{n+1}(z_{2}-z_{1})(z_{3}-z_{1})}dxdydz_{1}dz_{2}dz_{3}
\end{equation*}
Now substitute $\frac{1-z_{i}}{1-fz_{i}}$ by $w_{i}$ for $i=1,2,3$ and by straightforward verification of following\\
I,
\begin{equation*}
z_{i}=\frac{1-w_{i}}{1-fw_{i}}, \text{and }z_{i}=0 \Leftrightarrow w_{i}=1, z_{i}=1 \Leftrightarrow w_{i}=0
\end{equation*}
II,
\begin{equation*}
  dz_{i}=\frac{f-1}{(1-fw_{i})^2}dw_{i}\\
\end{equation*}
III, if $k=1,2,3$ and $k\neq i$, then\\
\begin{equation*}
  z_{k}-z_{i} = \frac{1-w_{k}}{1-fw_{k}}-\frac{1-w_{i}}{1-fw_{i}}=\frac{(f-1)(w_{k}-w_{i})}{(1-fw_{k})(1-fw_{i})}
\end{equation*}
IV,\\
\begin{equation*}
  \frac{z_{1}^n}{(1-fz_{1})^{n+1}}=\frac{(1-fw_{1})(1-w_{1})^n}{(1-f)^{n+1}}
\end{equation*}
we have\\
\begin{align*}
 Q_{1}(n)&=\int_{(0,1)^5}\frac{(xyz_{1})^n (1-x)^n P_{n}(y) }{(1-fz_{1})^{n+1}(z_{2}-z_{1})(z_{3}-z_{1})}dxdydz_{1}dz_{2}dz_{3}\\
 &=\int_{(0,1)^5}\frac{x^n (1-x)^n y^n P_{n}(y)(1-fw_{1})(1-w_{1})^n}{(1-f)^n (1-fw_{2})(1-fw_{3})(w_{2}-w_{1})(w_{3}-w_{1})}dxdydw_{1}dw_{2}dw_{3}\\
\end{align*}
recall that $1-f=1-(1-xy)=xy$, thus\\
\begin{equation*}
  Q_{1}(n)=\int_{(0,1)^5}(1-x)^n (1-w_{1})^n P_{n}(y)\frac{(1-fw_{1})}{(1-fw_{2})(1-fw_{3})(w_{2}-w_{1})(w_{3}-w_{1})}dxdydw_{1}dw_{2}dw_{3}\\
\end{equation*}
Once again using the partial fraction decomosition\\
\begin{equation*}
  \frac{(1-fw_{1})}{(1-fw_{2})(1-fw_{3})}=\frac{w_{1}-w_{2}}{w_{3}-w_{2}}\frac{1}{1-fw_{2}}+\frac{w_{3}-w_{1}}{w_{3}-w_{2}}\frac{1}{1-fw_{3}}
\end{equation*}
Then $Q_{1}(n)=R_{1}(n)+R_{2}(n)$, where\\
\begin{align*}
  R_{1}(n)&=-\int_{(0,1)^5}\frac{(1-x)^n (1-w_{1})^n P_{n}(y)}{(1-fw_{2})(w_{3}-w_{2})(w_{3}-w_{1})}dxdydw_{1}dw_{2}dw_{3}\\
  R_{2}(n)&=\int_{(0,1)^5}\frac{(1-x)^n (1-w_{1})^n P_{n}(y)}{(1-fw_{3})(w_{3}-w_{2})(w_{2}-w_{1})}dxdydw_{1}dw_{2}dw_{3}\\
\end{align*}
Notice that actually $R_{1}(n)$ and $R_{2}(n)$ are the same, therefore $Q_{1}(n)=2R_{2}(n)$. It's sufficient to compute $R_{2}(n)$. For convenience, substituting $w_{3},w_{2},w_{1}$ by $s,t,u$ respectively, i.e.\\
\begin{equation*}
  R_{2}(n)=\int_{(0,1)^5}\frac{(1-x)^n (1-u)^n P_{n}(y)}{(1-fs)^{n+1}(s-t)(t-u)}dxdydsdtdu\\
\end{equation*}
After n-fold partial integration with respect to $y$ for $R_{2}(n)$, we have\\
\begin{equation*}
  R_{2}(n)=\int_{(0,1)^5}\frac{x^n (1-x)^n y^n (1-y)^n s^n (1-u)^n }{(1-fs)^{n+1}(s-t)(t-u)}dxdydsdtdu
\end{equation*}
\end{proof}
Note that if $s\neq u$,
\begin{equation*}
  \int_{0}^{1}\frac{1}{(s-t)(t-u)}dt=\frac{\log(\frac{s}{1-s})-\log(\frac{u}{1-u})}{s-u}=\frac{\log(\frac{s(1-u)}{u(1-s)})}{s-u}
\end{equation*}
If $s>u$, then $\log(s)-\log(u)>0$ and $\log(1-u)>\log(1-s)$, therefore $\frac{\log(\frac{s(1-u)}{u(1-s)})}{s-u}>0$. If $u>s$, $\frac{\log(\frac{s(1-u)}{u(1-s)})}{s-u}=\frac{\log(\frac{u(1-s)}{s(1-u)})}{u-s}>0$. That is if $s\neq u$, $\frac{\log(\frac{s(1-u)}{u(1-s)})}{s-u}>0$.\\
Now we can see $R_{2}(n)>0$, and\\
\begin{equation*}
  R_{2}(n)=\int_{(0,1)^4}\frac{x^n \overline{x}^n y^n \overline{y}^n s^n \overline{u}^n }{(1-fs)^{n+1}}\frac{\log(\frac{s\overline{u}}{u\overline{s}})}{s-u}dxdydsdu
\end{equation*}
Since $J_{3}(n)=3Q_{1}(n)=6R_{2}(n)$. This is what we need to prove.\\

\begin{theorem}
For all integer $n\geq 1$, following inequalities are true.
\begin{equation*}
\frac{6}{(n+1)^4} \leq J_{3}(n)\leq \frac{6\pi^2}{(n+\frac{1}{2})^2}
\end{equation*}
\end{theorem}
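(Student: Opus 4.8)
The plan is to reduce everything to the manifestly positive quadruple integral $R_2(n)$ via the identity $J_3(n)=6R_2(n)$ established above, and to prove the sharp two-sided bound
\[
\frac{1}{(n+1)^4}\le R_2(n)\le\frac{\pi^2}{(n+\tfrac12)^2},
\]
after which multiplication by $6$ gives the theorem with the exact constants. The entire argument hinges on sandwiching the positive kernel $K(s,u):=\dfrac{\log\!\big(\tfrac{s\overline u}{u\overline s}\big)}{s-u}$ between two elementary weights,
\[
1\le K(s,u)\le\frac{1}{\sqrt{s\,u\,\overline s\,\overline u}}\qquad(s,u\in(0,1),\ s\ne u),
\]
and then integrating the resulting minorant and majorant, both of which are driven by the same application of the canonical transform (Lemma \ref{cano}).

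For the pointwise bounds I would first split $K$ into two logarithmic difference quotients, $K(s,u)=\dfrac{\log s-\log u}{s-u}+\dfrac{\log\overline s-\log\overline u}{\overline s-\overline u}$, both positive. For the lower bound, applying the left inequality of Lemma \ref{inequ} with $m=2$ to $x=\tfrac{s\overline u}{u\overline s}$ (and symmetrically when $u>s$) gives $K\ge\dfrac{2}{(\sqrt{s\overline u}+\sqrt{u\overline s})^2}$; since $2\sqrt{s\overline u\cdot u\overline s}\le s\overline u+u\overline s$ and $s\overline u+u\overline s=1-(su+\overline s\,\overline u)\le1$, the denominator is at most $2$, so $K\ge1$. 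For the upper bound I would bound each difference quotient by the logarithmic–mean inequality $\dfrac{\log a-\log b}{a-b}\le\dfrac{1}{\sqrt{ab}}$ (a sharper companion of Lemma \ref{inequ}, proved the same way since $\psi(t)=\sqrt t-t^{-1/2}-\log t$ is increasing with $\psi(1)=0$), obtaining $K\le\dfrac{1}{\sqrt{su}}+\dfrac{1}{\sqrt{\overline s\,\overline u}}$, and then invoke Cauchy--Schwarz $\sqrt{su}+\sqrt{\overline s\,\overline u}\le\sqrt{(s+\overline s)(u+\overline u)}=1$ to collapse this to $\dfrac{1}{\sqrt{s\,u\,\overline s\,\overline u}}$.

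With these weights the two estimates proceed identically: integrate out $u$ first, then apply Lemma \ref{cano} to the inner $s$-integral. For the lower bound, $K\ge1$ leaves $\int_0^1\frac{s^n}{(1-fs)^{n+1}}\,ds=(xy)^{-n}\int_0^1\frac{\overline s^{\,n}}{1-fs}\,ds\ge(xy)^{-n}\frac{1}{n+1}$, the factor $(xy)^{-n}$ cancels $x^ny^n$, and the residual $x,y,u$ integrals contribute $\frac{1}{(n+1)^3}\cdot\frac{1}{n+1}=\frac{1}{(n+1)^4}$. For the upper bound the integrand carries the weight $s^{n-1/2}\overline s^{-1/2}u^{-1/2}\overline u^{\,n-1/2}$: the $u$-integral is $B(\tfrac12,n+\tfrac12)$, and the decisive step is that Lemma \ref{cano} sends $\int_0^1\frac{s^{n-1/2}\overline s^{-1/2}}{(1-fs)^{n+1}}\,ds$ to $(xy)^{-n-1/2}\int_0^1 r^{-1/2}\overline r^{\,n-1/2}\,dr$ with the denominator exponent collapsing to zero, i.e.\ to $(xy)^{-n-1/2}B(\tfrac12,n+\tfrac12)$. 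Again $(xy)^{-n-1/2}$ cancels the numerator's $x,y$ powers, each $x,y$ integral becomes $B(\tfrac12,n+1)$, and assembling with $B(\tfrac12,n+\tfrac12)\,B(\tfrac12,n+1)=\frac{\pi}{n+\frac12}$ yields $R_2(n)\le B(\tfrac12,n+\tfrac12)^2B(\tfrac12,n+1)^2=\frac{\pi^2}{(n+\frac12)^2}$.

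I expect the main obstacle to be landing on the \emph{exact} upper constant: the crude form of Lemma \ref{inequ} only gives $K\le\dfrac{2}{\sqrt{s\,u\,\overline s\,\overline u}}$, which overshoots by a factor of two, so the sharper logarithmic–mean inequality together with the Cauchy--Schwarz collapse $\sqrt{su}+\sqrt{\overline s\,\overline u}\le1$ is genuinely indispensable for reaching $\frac{\pi^2}{(n+\frac12)^2}$ rather than twice that. The second delicate point is the bookkeeping of the canonical transform at half-integer parameters: writing the inner integral as $L(n-\tfrac12,-\tfrac12;n+1)$, Lemma \ref{cano} turns it into $(1-f)^{-n-1/2}L(-\tfrac12,n-\tfrac12;0)$, and it is precisely the vanishing of the transformed power $a+b+1-n=0$ that removes the $(1-fr)$ factor, decouples the five-fold integral into a clean product of Beta values, and manufactures both factors of $\pi$ and the $(n+\tfrac12)^{-2}$.
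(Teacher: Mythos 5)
Your proof is correct and lands on the exact constants; it shares the paper's skeleton --- reduce to $R_2(n)$ via $J_3(n)=6R_2(n)$, bound the kernel $K(s,u)=\log\!\left(\frac{s\overline{u}}{u\overline{s}}\right)/(s-u)$ pointwise, integrate out $u$, apply the canonical transform of Lemma \ref{cano} to the $s$-integral, and finish with Beta-function identities --- but your kernel estimates take a genuinely different route. The paper never writes the clean sandwich $1\le K\le(su\overline{s}\,\overline{u})^{-1/2}$: it works with the finite-$m$ bounds $K>(s\overline{u})^{-1/m}$ and $K\le(s\overline{u})^{-\frac{1}{2}+\frac{1}{2m}}(u\overline{s})^{-\frac{1}{2}-\frac{1}{2m}}$ obtained from Lemma \ref{inequ} plus AM--GM, drags the $m$-dependent exponents through the canonical transform and the Beta evaluations, and only at the end lets $m\to\infty$, invoking monotonicity of the Beta factors in $m$. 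You instead prove the limiting pointwise bounds once and for all: the minorant $K\ge 1$ from the $m=2$ case of Lemma \ref{inequ} together with $s\overline{u}+u\overline{s}=1-(su+\overline{s}\,\overline{u})\le 1$, and the majorant by splitting $K$ into the two positive difference quotients $\frac{\log s-\log u}{s-u}$ and $\frac{\log\overline{s}-\log\overline{u}}{\overline{s}-\overline{u}}$, bounding each by the logarithmic-mean inequality, and collapsing with Cauchy--Schwarz via $\sqrt{su}+\sqrt{\overline{s}\,\overline{u}}\le 1$. This buys a cleaner argument: no limit interchange, no monotonicity claim, and the half-integer bookkeeping in Lemma \ref{cano} (your observation that $a+b+1-n=0$ removes the $(1-fr)$ factor) occurs once with fixed exponents; your Beta arithmetic, $B(\frac{1}{2},n+\frac{1}{2})B(\frac{1}{2},n+1)=\frac{\pi}{n+1/2}$, reproduces exactly the paper's limiting value $\frac{\pi^2}{(n+1/2)^2}$, and I checked that the cancellations of $(xy)^{-n}$ and $(xy)^{-n-1/2}$ against the numerator powers are right in both of your bounds. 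The one inaccuracy is your closing claim that Lemma \ref{inequ} alone overshoots the majorant by a factor of two: the paper's finite-$m$ bound converges to the same majorant $(su\overline{s}\,\overline{u})^{-1/2}$ as $m\to\infty$, so your sharper log-mean route is a convenience rather than a necessity --- but nothing in your argument depends on that remark.
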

\begin{proof}
The proof is divided into two parts\\
\textbf{I}.\\
Firstly we give the upper bound of $J_{3}(n)$. In the preceding Lemma we proved that $J_{3}(n)=6R_{2}(n)$, where\\
\begin{equation*}
R_{2}(n)=\int_{(0,1)^4}\frac{(x\overline{x}y\overline{y}s\overline{u})^n}{(1-fs)^{n+1}}\frac{\log(\frac{s\overline{u}}{\overline{s}u})}{s-u}dxdydsdu
\end{equation*}
Now apply the Lemma \ref{inequ} we obtain for any positive integer $m\geq 2$\\
\begin{align*}
\frac{\log(\frac{s\overline{u}}{\overline{s}u})}{s-u}&\leq m (\sqrt[m]{\frac{s\overline{u}}{\overline{s}u}}-1)/(s-u)\leq m\frac{\sqrt[m]{s\overline{u}}-\sqrt[m]{\overline{s}u}}{\sqrt[m]{\overline{s}u}(s\overline{u}-\overline{s}u)}\\
\end{align*}
Note that\\
\begin{equation*}
  \frac{s\overline{u}-\overline{s}u}{(s\overline{u})^{\frac{1}{m}}-(\overline{s}u)^{\frac{1}{m}}}=\sum_{k=0}^{m-1}(s\overline{u})^{\frac{m-1-k}{m}}(\overline{s}u)^{\frac{k}{m}}
\end{equation*}
we apply the inequality of arithmetic and geometric means, then\\
\begin{equation*}
\sum_{k=0}^{m-1}(s\overline{u})^{\frac{m-1-k}{m}}(\overline{s}u)^{\frac{k}{m}}\geq m(s\overline{s}u\overline{u})^{\frac{m(m-1)}{2m}\frac{1}{m}}=m(s\overline{s}u\overline{u})^{\frac{(m-1)}{2m}}
\end{equation*}
Therefore\\
\begin{equation*}
\frac{\log(\frac{s\overline{u}}{\overline{s}u})}{s-u}\leq \frac{1}{(s\overline{u})^{\frac{1}{2}-\frac{1}{2m}}(\overline{s}u)^{\frac{1}{2}+\frac{1}{2m}}}
\end{equation*}
It turns out that\\
\begin{equation*}
R_{2}(n)\leq \int_{(0,1)^4}\frac{(x\overline{x}y\overline{y})^n}{(1-fs)^{n+1}}(s)^{n-\frac{1}{2}+\frac{1}{2m}}(\overline{s})^{-\frac{1}{2}-\frac{1}{2m}}(\overline{u})^{n-\frac{1}{2}+\frac{1}{2m}}(u)^{-\frac{1}{2}-\frac{1}{2m}}dxdydsdu
\end{equation*}
On the one hand,\\
\begin{equation*}
\int_{0}^{1}\overline{u}^{n-\frac{1}{2}+\frac{1}{2m}}u^{-\frac{1}{2}-\frac{1}{2m}}du=B(n+\frac{1}{2m}+\frac{1}{2},-\frac{1}{2m}+\frac{1}{2})
\end{equation*}
On the other hand, by the canonical transform (Lemma \ref{cano}) we obtain\\
\begin{align*}
  &\int_{(0,1)^3}\frac{(x\overline{x}y\overline{y})^n}{(1-fs)^{n+1}}(s)^{n-\frac{1}{2}+\frac{1}{2m}}(\overline{s})^{-\frac{1}{2}-\frac{1}{2m}}dxdyds \\ =&\int_{(0,1)^3}(1-f)^{-\frac{1}{2}-\frac{1}{2m}-n}(x\overline{x}y\overline{y})^n z^{-\frac{1}{2}-\frac{1}{2m}} \overline{z}^{n-\frac{1}{2}+\frac{1}{2m}}dxdydz \\
    =&\int_{(0,1)^3}x^{-\frac{1}{2}-\frac{1}{2m}}\overline{x}^{n}y^{-\frac{1}{2}-\frac{1}{2m}}\overline{y}^{n}z^{-\frac{1}{2}-\frac{1}{2m}} \overline{z}^{n-\frac{1}{2}+\frac{1}{2m}}dxdydz \\
    =&(B(n+1,\frac{1}{2}-\frac{1}{2m}))^2 B(n+\frac{1}{2}+\frac{1}{2m},\frac{1}{2}-\frac{1}{2m})
\end{align*}
Therefore $R_{2}(n)\leq (B(n+1,\frac{1}{2}-\frac{1}{2m}) B(n+\frac{1}{2}+\frac{1}{2m},\frac{1}{2}-\frac{1}{2m}))^2$. Moreover, both $B(n+1,\frac{1}{2}-\frac{1}{2m})$ and $B(n+\frac{1}{2}+\frac{1}{2m},\frac{1}{2}-\frac{1}{2m})$ are decreasing with $m$ increasing. Let $m\rightarrow \infty$, that is\\
\begin{align*}
  &\lim_{m\rightarrow \infty}  (B(n+1,\frac{1}{2}-\frac{1}{2m}) B(n+\frac{1}{2}+\frac{1}{2m},\frac{1}{2}-\frac{1}{2m}))^2\\
  =&(B(n+1,\frac{1}{2}) B(n+\frac{1}{2},\frac{1}{2}))^2 \\
   =&(\frac{\Gamma(n+1)\Gamma(\frac{1}{2})}{\Gamma(n+\frac{3}{2})}\frac{\Gamma(n+\frac{1}{2})\Gamma(\frac{1}{2})}{\Gamma(n+1)})^2\\
=&\frac{\pi^2}{(n+\frac{1}{2})^2}\\
\end{align*}
Therefore $ J_{3}(n)\leq \frac{6\pi^2}{(n+\frac{1}{2})^2}$.\\
\textbf{II}.\\
In this part we give the lower bound of $J_{3}(n)$. By the Lemma \ref{inequ},\\
\begin{equation*}
\frac{\log(\frac{s\overline{u}}{\overline{s}u})}{s-u}\geq m\frac{1-(\frac{s\overline{u}}{\overline{s}u})^{-\frac{1}{m}}}{s\overline{u}-{\overline{s}u}}=m\frac{(s\overline{u})^{\frac{1}{m}}-(\overline{s}u)^{\frac{1}{m}}}{(s\overline{u})^{\frac{1}{m}}(s\overline{u}-{\overline{s}u})}
\end{equation*}
Likewise\\
\begin{equation*}
  \frac{s\overline{u}-\overline{s}u}{(s\overline{u})^{\frac{1}{m}}-(\overline{s}u)^{\frac{1}{m}}}=\sum_{k=0}^{m-1}(s\overline{u})^{\frac{m-1-k}{m}}(\overline{s}u)^{\frac{k}{m}}
\end{equation*}
If $s\overline{u}> \overline{s}u$, $\sum_{k=0}^{m-1}(s\overline{u})^{\frac{m-1-k}{m}}(\overline{s}u)^{\frac{k}{m}}< m (s\overline{u})^{\frac{m-1}{m}}<m$. If $s\overline{u}< \overline{s}u$, $\sum_{k=0}^{m-1}(s\overline{u})^{\frac{m-1-k}{m}}(\overline{s}u)^{\frac{k}{m}}< m (\overline{s}u)^{\frac{m-1}{m}}<m$ as well. So\\
\begin{equation*}
\frac{\log(\frac{s\overline{u}}{\overline{s}u})}{s-u}> \frac{m}{m(s\overline{u})^{\frac{1}{m}}}=(s\overline{u})^{-\frac{1}{m}}
\end{equation*}
Now come back to $R_{2}(n)$,\\
\begin{equation*}
R_{2}(n)> \int_{(0,1)^4}\frac{(x\overline{x}y\overline{y})^n}{(1-fs)^{n+1}}(s)^{n-\frac{1}{m}} \overline{u}^{n-\frac{1}{m}}dxdydsdu
\end{equation*}
Obviously on the one hand,\\
\begin{equation*}
\int_{0}^{1} \overline{u}^{n-\frac{1}{m}}du=\frac{1}{n-\frac{1}{m}+1}
\end{equation*}
On the other hand, with the canonical transform we obtain\\

\begin{align*}
  &\int_{(0,1)^3}\frac{(x\overline{x}y\overline{y})^n}{(1-fs)^{n+1}} s^{n-\frac{1}{m}}dxdyds \\
  =&\int_{(0,1)^3}\frac{(x\overline{x}y\overline{y})^n \overline{z}^{n-\frac{1}{m}}}{(1-f)^n (1-fz)^{1-\frac{1}{m}}}dxdydz \\
\geq &\int_{(0,1)^3}\overline{x}^{n}\overline{y}^{n} \overline{z}^{n-\frac{1}{m}} dxdydz \\
= &\frac{1}{(n+1)^2}\frac{1}{(n-\frac{1}{m}+1)}
\end{align*}
Therefore $R_{2}(n)\geq \frac{1}{(n+1)^2 (n-\frac{1}{m}+1)^2}$. Likewise, taking the limit we have $J_{3}(n)\geq \frac{6}{(n+1)^4}$. Therefore, finally \\
\begin{equation*}
\frac{6}{(n+1)^4} \leq J_{3}(n)\leq \frac{6\pi^2}{(n+\frac{1}{2})^2}
\end{equation*}
\end{proof}

Hence we have the conclusion: as $n$ tends to infinity, although $J_{3}(n)\rightarrow 0$, $lcm(1,2,...,n)^5\cdot J_{3}(n)\rightarrow \infty$. That is, compare to the rational approximation of $\zeta(3)$ (see \cite{beukers}), the approximation of $\zeta(5)$ by generalized Beukers' method is too slow. One has to looking for another method to prove the irrationality of $\zeta(5)$. Following Table gives some numerical comparison.\\

\begin{center}\label{table1}
The numerical comparison of upper bound and lower bound of $J_{3}(n)$.\\
\begin{tabular}{|c|c|c|c|}
\hline
$n$  & $\frac{6}{(n+1)^4}$ & $J_{3}(n)$ & $\frac{6\pi^2}{(n+\frac{1}{2})^2}$ \\ \hline
$1$  & $0.3750$  & $4.4313$ & $26.3189$ \\ \hline
$2$  & $0.0741$  & $0.9474$ & $9.4748$ \\ \hline
$3$  & $0.0234$  & $0.2996$ & $4.8341$ \\ \hline
$4$  & $0.0096$  & $0.1237$ & $2.9243$ \\ \hline
$5$  & $0.0046$  & $0.0605$ & $1.9576$ \\ \hline
$6$  & $0.0025$  & $0.0332$ & $1.4016$ \\ \hline
$7$  & $0.0015$  & $0.0198$ & $1.0528$ \\ \hline
$8$  & $0.0009$  & $0.0182$ & $0.8196$ \\ \hline
$9$  & $0.0006$  & $0.0126$ & $0.6562$ \\ \hline
$10$  & $0.0004$  & $0.0058$ & $0.5371$ \\ \hline
\end{tabular}
\end{center}

\section{Acknowledgement}
Ich m\"{o}chte mich bei den Leuten bedanken, die im 2012/2013 mich online verleumdet hatten. Diese ungerechte Worte sind mir noch deutlich errinnerlish. Diese ungerechte Worte gaben mir Antrieb und machten mir ununterbrochen weiterkommen.

\bibliographystyle{unsrt}  


Xiaowei Wang(\begin{CJK}{UTF8}{gbsn}王骁威\end{CJK})\\
Institut f\"{u}r Mathematik, Universit\"at  Potsdam, Potsdam OT Golm, Germany\\
 Email: \texttt{xiawang@gmx.de}

\end{document}